\DeclareMathOperator{\mean}{\mathbb{E}}
\DeclareMathOperator{\Cov}{\mathbb{C}ov}
\DeclareMathOperator{\Cor}{\mathbb{C}or}
\DeclareMathOperator{\meanappr}{E}
\DeclareMathOperator{\Id}{Id}
\DeclareMathOperator{\LandauO}{\mathcal{O}}
\renewcommand{\div}{\operatorname{div}}
\DeclareMathOperator{\supp}{supp}
\newcommand{\dd}{\mathrm{d}}
\newcommand{\dif}{\, \dd}
\DeclareMathOperator{\Fre}{{D}}
\DeclareMathOperator*{\argmax}{arg~max}
\DeclareMathOperator*{\argmin}{arg~min}
\newcommand{\nobs}{{K}}
\newcommand{\appr}{\text{approx}}
\newcommand{\MAP}{\mathrm{MAP}}
\newcommand{\fixed}{\star}
\newcommand{\nom}{\Theta} %
\newcommand{\datamark}{\delta}
\newcommand{\data}{\eta^\datamark}
\newcommand{\domain}{\mathcal{D}}
\newcommand{\boundary}{\partial \domain}
\newcommand{\prior}[1]{\pi_{#1}}
\newcommand{\posterior}[1]{{\prior{#1}^\datamark}}
\pgfplotsset{compat=newest}
\pgfplotsset{plot coordinates/math parser=false}
\newlength\figureheight
\newlength\figurewidth 
\theoremstyle{plain}
\newtheorem{theorem}{Theorem}[section]
\newtheorem{lemma}[theorem]{Lemma}
\newtheorem{corollary}[theorem]{Corollary}
\theoremstyle{definition}
\theoremstyle{remark}
\newtheorem{remark}[theorem]{Remark}
\title{Local sensitivity analysis for Bayesian inverse problems}
\author{Jürgen Dölz}
\author{David Ebert}
\address{Institute for Numerical Simulation, University of Bonn, Friedrich--Hirzebruch--Allee~7, 53115~Bonn, Germany}
\email{doelz@ins.uni-bonn.de, ebert@ins.uni-bonn.de}
\thanks{The authors gratefully acknowledge the access to the Marvin cluster of the University of Bonn. The work of the authors was partially supported by the Deutsche Forschungsgemeinschaft (DFG, German Research Foundation), project 501419255. The authors also received support from the DFG under Germany's Excellence Strategy, project 390685813.}
\date{\today}
\begin{document} 

\begin{abstract}
	We present an extension of local sensitivity analysis, also referred to as the perturbation approach for uncertainty quantification, to Bayesian inverse problems.
    More precisely, we show how moments of random variables with respect to the posterior distribution can be approximated efficiently by asymptotic expansions. This is under the assumption that the measurement operators and prediction functions are sufficiently smooth and their corresponding stochastic moments with respect to the prior distribution exist.
    Numerical experiments are presented to the illustrate the theoretical results.
\end{abstract}

\maketitle

\section{Introduction}

\subsection{Motivation}

One of the key goals in Bayesian inverse problems is to update prior beliefs on the distribution of random variables by additional knowledge. These random variables often correspond to physical phenomena, where the additional knowledge is given by measurements \cite{KS2005}.
The updated information is then given as a posterior distribution, which can be used to compute statistical quantities of interest such as stochastic moments.
Let $(\Omega,\mathcal{A},\prior{})$ be a probability space, $X$ a Banach space, and $\xi:\Omega\to X$ a random variable. 
Given an observable mapping $Q:X \to \mathbb{R}^\nobs$, a prior distribution $\prior{\xi} = \prior{} \circ \xi^{-1}$, and data $\data$ observed under additive Gaussian noise with covariance matrix $\Sigma$, independent of $\xi$, the posterior distribution~$\posterior{\xi}$ is given through the Radon--Nikodým derivative
\begin{align}
    \frac{\dd \posterior{\xi}}{\dd \prior{\xi} }(x) 
    \propto \exp\big( - \tfrac{1}{2} \| \data - Q(x) \|^2_\Sigma \big) , \label{eq:posterior1}
\end{align}
cf. \cite{Stu2010}. Stochastic moments of a quantity of interest $R$ defined on $X$ according to the posterior distribution, the \emph{posterior moments}, are given by
\begin{align}
    \int \limits_X R(x) \otimes \cdots \otimes R(x) ~\dd\posterior{\xi}(x). \label{eq:integral}
\end{align}
Approximating the latter computationally is a nontrivial task, especially since $Q$ and $R$ are often the solution operators of partial differential equations and thus computationally expensive when implemented. Nevertheless, approximating \eqref{eq:integral} numerically has been subject to intensive research efforts over the past years.

\subsection{Numerical methods for Bayesian inverse problems}
The common approach to compute stochastic moments \eqref{eq:integral} computationally are sampling based methods. 
Among these, \emph{Markov Chain Monte Carlo} (MCMC) \emph{methods} \cite{DKPR1987,Has1970} aim at drawing samples from the posterior distribution to estimate \eqref{eq:integral} by a Monte Carlo estimator.
Like all Monte Carlo estimators they provide a relatively slow convergence rate of $\mathcal{O}(N^{-1/2})$ with respect to number of samples $N$.
While Monte Carlo estimators are naturally robust to the dimension $d$ of the parameter space $X$, finding a proposal kernel for MCMC estimators that is robust as $d\to \infty$ is often a nontrivial task, cf. \cite{CRSW2013,RR2001,RS2018,Tie1998}.
Over the years MCMC methods have seen various proposals for designing efficient proposal kernels for the Markov chain. Examples are multilevel MCMC \cite{DKST2019,HSS2013,LDF+2023}, adaptive MCMC \cite{HST2001,Ros2011}, Hamiltonian Monte Carlo \cite{DKPR1987,GC2011,HG2014}, and hybrid Monte Carlo \cite{BPSS2011,DKPR1987}, which all aim to improve the way and at which computational cost proposals for the chain are generated.

MCMC methods to compute stochastic moments can be considered as quadrature methods and, thus, a natural approach to overcome the MCMC's slow convergence rate is to employ improved quadrature formulas. To do so \eqref{eq:posterior1} is commonly substituted into \eqref{eq:integral} and a quadrature formula for the prior measure is employed. 
In comparison to MCMC methods, this approach requires the normalization constant of the posterior measure to be numerically calculated explicitly. 
Exploiting this perspective and additionally available smoothness of the integrand, \emph{sparse grid methods} \cite{MZ2009,SS2013}, \emph{polynomial chaos} (PC) \emph{methods} 
\cite{MN2009,MNR2007,RLPM2012}, and (higher-order) \emph{quasi-Monte Carlo} (QMC) \emph{methods} \cite{DGLS2017,DKOS2025,GP2018,SST2017} can be employed to improve the convergence rate. 
These methods can be further accelerated when they are combined to a \emph{multilevel quadrature} algorithm~\cite{DGLS2017,DHJM2022,SST2017}. 

As an alternative to quadrature algorithms, \emph{Kalman Filters} \cite{Kal1960} and their extensions such as \emph{ensemble Kalman Filters} \cite{Eve1994} and \emph{polynomial chaos Kalman Filters} \cite{RLPM2012} are methods which are designed to provide a quick estimate of the mean and variance of certain parameters.
Computational speed is favored in these methods and, except for linear systems and Gaussian noise, the approximation properties of Kalman Filters are often not well understood.
However, there is some progress being made in the context of Bayesian inverse problems \cite{ESS2015,ILS2013,LTT2016,RLPM2012,SS2017}.

Besides the above mentioned sampling based methods, an alternative approach for the numerical evaluation of \eqref{eq:integral} is to first approximate the posterior density by analytical methods and then to estimate it by means of numerical methods. 
Significant gains can be made if the analytical surrogate can be more efficiently approximated than the original expression \eqref{eq:integral}. 
The most prominent example is the \emph{Laplace approximation} \cite{TK1986}, which approximates the posterior distribution by a Gaussian distribution centered at the maximum a posteriori (MAP) estimate. The covariance of this Gaussian distribution is based on a second-order Taylor expansion of the logarithm of the posterior density around the MAP estimate. 
It can be shown that this approximation converges to the true posterior when the posterior distribution is unimodal and the \emph{noise level} decreases \cite{HK2022,SSW2020}. 
Otherwise, when the approximation of the posterior distribution by a Gaussian distribution is an insufficient approximation, the Laplace approximation can be used to improve quadrature algorithms such as Monte Carlo, QMC methods, or sparse grids \cite{SS2016,SSW2020}. 
In this situation, there hold similar considerations as for sampling based methods.

\subsection{Local sensitivity analysis for forward problems}
For forward problems in uncertainty quantification, i.e., computing statistical quantities of interest of some quantity with random input data, \emph{local sensitivity analysis} is an attractive alternative to sampling based methods \cite{Smi2013}. In the context of partial differential equations local sensitivity analysis is also referred to as the \emph{perturbation approach}, see, e.g.,  \cite{BN2014,CS2013,Dol2020,DE2024,EJ2020,HPS2013,HSS2008a}. Local sensitivity analysis assumes that the input data are modeled as random fluctuations around a
reference state $x_0\in X$, i.e., as
\begin{align}\label{eq:intropert}
x(\omega)
=
x_0+\xi(\omega)\in X,
\end{align}
and expand the uncertainty-to-solution map into a truncated Taylor expansion around $x_0$. Computing the corresponding derivatives is often not more expensive than the model evaluation itself. 
As the Taylor expansion admits an asymptotic truncation error with respect to amplitude of the uncertainty $\xi(\omega)$, local sensitivity analysis is most suited to efficiently calculate stochastic quantities of interest in applications with small scale uncertainty to high precision. In this regime it often outperforms sampling based algorithms. 
As it turns out, this is highly relevant in engineering applications \cite{BR1988,DESZ2024,MI1999,Ibr1987,ID1976,SAJF2018}. 
Compared to many sampling bases approaches, local sensitivity analysis can be considered to be immune to the curse of dimensionality.

To the best of our knowledge, the local sensitivity analysis has only been considered for forward uncertainty quantification. An analogue for Bayesian inverse problems is missing so far.

\subsection{Contributions}
In this article, we extend and analyze the framework of local sensitivity analysis for forward problems to Bayesian inverse problems. Assuming random input data of the form \eqref{eq:intropert}, a prior distribution for the perturbations, and Gaussian measurement noise, we provide approximations of posterior moments in terms of Taylor expansions. 
More precisely, the contributions of this paper are as follows.
\begin{enumerate}
    \item Given asymptotic expansions for $Q$ and $R$, we provide and analyze asymptotic expansions of \eqref{eq:integral}, i.e., we derive a local sensitivity analysis with respect to the posterior measure.
    \item As local sensitivity analysis is sensitive to the reference point, we discuss how the reference point can be iteratively improved and how this connects to classically regularized inverse problems for the specific case of the posterior mean of the parameter.
    \item We discuss the numerical implementation and provide extensive numerical examples for the Darcy flow with log-normal permeability and small noise level as well as the Lotka--Volterra ODE with Brownian bridge perturbation, illustrating the validity of our numerical examples.
\end{enumerate}
We note that a similar concept based on a perturbation analysis of the posterior measure with respect to the prior measure, i.e., how the posterior is affected by changes in the prior, is known in the context \emph{robust Bayesian analysis} \cite{Ber1990,BMP+1994,IR2000}. This is different from the local sensitivity analysis we consider here, as we consider the prior as fixed and given and our investigations focus on perturbations of $Q$ and $R$ and how these perturbations affect \eqref{eq:integral}.

\subsection{Outline}
The rest of this article is organized as follows. In the following \cref{sec:setting}, we recall the basics of Bayesian inverse problems, statistical moments, and Taylor expansions in Banach spaces. Then, in \cref{sec:perturbation}, we introduce and analyze local sensitivity analysis for posterior moments in Bayesian inverse problems. In \cref{sec:impr} we discuss how the reference point of the expansions for the posterior mean can be improved and how it relates to classically regularized inverse problems. \Cref{sec:implementation} is concerned with remarks on implementation when using affinely parametrized random variables. Finally, in \cref{sec:examples}, we illustrate our theoretical findings with numerical examples, before we draw our conclusions in \cref{sec:conclusion}.

\section{Preliminaries} \label{sec:setting}

\subsection{Bayesian inverse problems}
In the following we consider a \emph{forward response map} $G\colon X\to Y$ between two Banach spaces from which we refer to $X$ as the \emph{parameter space} and $Y$ as the \emph{observable space}. 
With $Y$ being possibly infinite dimensional, we assume to have a bounded and linear \emph{observation operator} $O\in\mathcal{L}(Y;\mathbb{R}^\nobs)$ to our disposal which allows us to perform $K$ ``measurements'' on the observation space. 
We refer to the concatenation $Q=O\circ G\colon X\to\mathbb{R}^\nobs$ as the \emph{measurement operator} which maps parameters from $X$ to $\nobs$ ``measurements''. 
As is common for Bayesian inverse problems we assume that the measurements are subject to additive Gaussian noise $\varepsilon\sim\mathcal{N}(0,\Sigma)$, with $\Sigma\in\mathbb{R}^{\nobs\times \nobs}$ being a symmetric, positive definite covariance matrix, and that the parameter in $X$ is an $X$-valued random variable $\xi$ defined on some probability space $(\Omega,\mathcal{A},\prior{})$. 
We call $\prior{\xi} = \prior{} \circ \xi^{-1}$ the \emph{prior} distribution of $\xi$ and assume that the noise $\varepsilon$ is independent of the model parameter $\xi$.
Given this model, the ``physically'' accessible measurements are given as realizations $\data\in\mathbb{R}^\nobs$ of the random variable
\begin{align} \label{eq:noisemodel}
	\eta = Q(\xi) + \varepsilon , 
	\qquad (\xi,\varepsilon) \sim \prior{\xi} \otimes \mathcal{N}(0,\Sigma).
\end{align}
Given a realization $\data\in\mathbb{R}^\nobs$ of this random variable we define the \emph{(unnormalized) likelihood} $\nom\colon X\to\mathbb{R}$ as
\[
\nom(x) 
= \exp( -\Phi(x) ) ,
\]
and we refer to 
\begin{align}
	\Phi(x) = \frac{1}{2} \big\|\data - Q(x)\big\|^2_\Sigma \label{eq:potential}
\end{align} 
as the \emph{potential} of the likelihood with $\langle \cdot , \cdot \rangle_\Sigma = \langle \Sigma^{-1} \cdot , \cdot \rangle_{\mathbb{R}^\nobs}$ being the scalar product that induces the norm $\|\cdot \|^2_\Sigma = \langle \cdot,\cdot \rangle_\Sigma$.
The following theorem allows to condition the prior distribution $\prior{\xi}$ to the measurements $\data$.
We call distributions conditioned on the measurements $\data$ \emph{posterior} distributions.
\begin{theorem}[{\cite[Theorems 6.29 and 6.31]{Stu2010}}]
Assume that the potential \eqref{eq:potential} is $\prior{\xi}$-measur\-able. Then the posterior distribution of $\xi$ conditioned to $\data$ exists and is denoted by $\posterior{\xi} = \prior{\xi|\eta=\data}$. It is absolutely continuous with respect to $\prior{\xi}$ and given through the Radon--Nikodým derivative
\begin{align}
	\frac{\dd \posterior{\xi}}{\dd \prior{\xi}} (x) 
	=\frac{\nom(x)}{\int_X \nom(x)\,\dd\prior{\xi}(x)}  . \label{eq:posterior}
\end{align}
\end{theorem}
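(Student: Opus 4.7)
The plan is to derive Bayes' formula in the infinite-dimensional setting from the joint law of $(\xi,\eta)$. Since $\varepsilon\sim\mathcal{N}(0,\Sigma)$ is independent of $\xi$, the pair $(\xi,\varepsilon)$ has distribution $\prior{\xi}\otimes\mathcal{N}(0,\Sigma)$ on $X\times\mathbb{R}^\nobs$. Using that $Q$ is measurable (a prerequisite for $\Phi$ being $\prior{\xi}$-measurable) and that the noise model \eqref{eq:noisemodel} is a measurable transformation $(x,e)\mapsto(x,Q(x)+e)$, I would push this product measure forward to obtain the joint law $\mu$ of $(\xi,\eta)$. The explicit form of the Gaussian density then gives
\[
\dd\mu(x,y) \;=\; (2\pi)^{-\nobs/2}\det(\Sigma)^{-1/2}\exp\!\bigl(-\tfrac{1}{2}\|y-Q(x)\|_\Sigma^2\bigr)\,\dd\prior{\xi}(x)\,\dd y,
\]
so $\mu$ is absolutely continuous with respect to $\prior{\xi}\otimes\lambda$, where $\lambda$ denotes Lebesgue measure.

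Next I would apply the disintegration theorem (valid since the observation space $\mathbb{R}^\nobs$ is Polish and $X$ may be assumed Polish, which is the standing assumption of the framework of \cite{Stu2010}) to split $\mu$ into the marginal law of $\eta$ and a regular conditional distribution of $\xi$ given $\eta=y$. The marginal density of $\eta$ is
\[
p_\eta(y) \;=\; (2\pi)^{-\nobs/2}\det(\Sigma)^{-1/2}\int_X\exp\!\bigl(-\tfrac{1}{2}\|y-Q(x)\|_\Sigma^2\bigr)\,\dd\prior{\xi}(x),
\]
and dividing the joint density by this marginal, after the Gaussian prefactors cancel, produces a conditional law of $\xi$ given $\eta=y$ that is absolutely continuous with respect to $\prior{\xi}$ with Radon--Nikodým derivative $\nom(x)/\int_X\nom\,\dd\prior{\xi}$. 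Evaluating at $y=\data$ gives the claim.

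Two verifications remain. First, the normalization $Z(\data)=\int_X\nom(x)\,\dd\prior{\xi}(x)$ must be strictly positive and finite: finiteness is immediate from $0<\nom\le 1$ together with $\prior{\xi}(X)=1$, and strict positivity follows because $\nom(x)=\exp(-\Phi(x))>0$ pointwise on a probability space, so $Z(\data)\in(0,1]$ and the quotient is well defined. Second, one needs $p_\eta(y)>0$ for almost every $y$ so that the disintegration yields a genuine probability measure at $y=\data$; this again follows from strict positivity of the integrand. The main delicacy is invoking the disintegration theorem in the possibly infinite-dimensional parameter space $X$, which requires the underlying measure-theoretic regularity (Polish or at least standard Borel structure) that is standard in the Bayesian inverse problem literature; once this is granted, the remainder of the argument is a cancellation of Gaussian constants and is purely algebraic.
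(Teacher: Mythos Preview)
The paper does not prove this theorem at all: it is quoted as a known result from \cite[Theorems~6.29 and~6.31]{Stu2010} and stated without proof, serving only as background for the Bayesian framework. Your sketch---pushing forward the product law $\prior{\xi}\otimes\mathcal{N}(0,\Sigma)$ under $(x,e)\mapsto(x,Q(x)+e)$, disintegrating the resulting joint law, and cancelling the Gaussian prefactor---is essentially the argument Stuart gives, and it is correct as an outline; but there is nothing in the present paper to compare it against.
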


\subsection{Stochastic moments}
Let $R\colon X\to Z$ be a \emph{prediction function} taking values in some Hilbert space $Z$ and mapping parameters to some \emph{quantity of interest}. 
An illustration of the prediction function and the quantities of the Bayesian setting is given in \cref{fig:sketch}.
\begin{figure} 
	\begin{center}
		\begin{tikzpicture}[scale=1.5]
			\node[] (O) at (-1,1) {$\Omega$};
			\node[] (X) at (0,1) {$X$};
			\node[] (Y) at (1.5,2) {$Y$};
			\node[] (R) at (3,2) {$\mathbb{R}^\nobs$};
			\node[] (n) at (4.5,2) {$\eta \in \mathbb{R}^\nobs$};
			\node[] (Z) at (1.5,.5) {$Z$};
			\draw[->] (O)--(X) node [above,midway]{$\xi$};
			\draw[->] (X)--(Y) node [above,midway]{$G$};
			\draw[->] (X)--(Z) node [above,midway]{$R$};
			\draw[->] (Y)--(R) node [above,midway]{${O}$};
			\draw[->] (R)--(n) node [above,midway]{$+\varepsilon$};
			\draw[->] (X)--(R) node [below,near end]{$Q$};
		\end{tikzpicture}
		\caption{Illustration of the mapping properties considered for computing stochastic moments with respect to the posterior distribution.} \label{fig:sketch}
	\end{center}
\end{figure}
For $R\in L^1_{\prior{\xi}}(X;Z)$ the \emph{mean} or \emph{expected value} in terms of the prior distribution is defined as
\begin{subequations} \label{eq:voi}
\begin{align}
	\mean[R] &= \int_X R(x) \,\dd\prior{\xi}(x) , \label{eq:voi:expprior} \\
\intertext{and the posterior mean is the mean conditioned on realization $\data$  of $\eta$}
	\mean_\posterior{}[ R ]
	&= \mean [ R \,|\, \eta = \data ] 
	= \mean \bigg[ R \frac{\nom}{\mean [\nom]} \bigg] \label{eq:voi:exppost} . \\
\intertext{For $R_1,R_2\in L^2_{\prior{\xi}}(X;Z)$, and with $\otimes$ denoting the Hilbertian tensor product, we obtain the correlations}
    \Cor[R_1,R_2] &= \mean[R_1 \otimes R_2] , \\
    \Cor_{\posterior{}}[R_1,R_2] &= \mean_{\posterior{}}[R_1 \otimes R_2], \\
\intertext{and covariances}
	\Cov[R_1,R_2]
	&= \mean\big[ \big(R_1 - \mean[R_1] \big) \otimes \big(R_2 - \mean[R_2] \big) \big] , \label{eq:voi:cov} \\
    \Cov_\posterior{}[R_1,R_2]
	&= \mean_\posterior{}\big[ \big(R_1 - \mean_\posterior{}[R_1] \big) \otimes \big(R_2 - \mean_\posterior{}[R_2] \big) \big] . \label{eq:voi:cov_post}
\end{align}
\end{subequations}
Additionally, for $R=R_1=R_2$, we introduce the shorthand notation
\begin{align*}
    \Cov[R] &= \Cov[R,R] , \\
    \Cov_\posterior{}[R] &= \Cov_\posterior{}[R,R] , \\
    \Cor[R] &= \Cor[R,R] , \\ 
    \Cor_\posterior{}[R] &= \Cor_\posterior{}[R,R] .
\end{align*}
For the rest of this article, if expected values or covariances are not indexed, they are to be understood in the sense of the prior distribution.

\begin{remark}
We assume $Z$ to be a Hilbert space to avoid technicalities using the tensor product occurring in the definitions of correlation and covariance. We refer to \cite{JK2015} for a discussion concerning the details when $Z$ is a Banach space.
\end{remark}

\subsection{Taylor expansions in Banach spaces}
For the following results, we assume that the measurement operator and the variable of interest are at least three times continuously differentiable on an open convex neighborhood $U \subset X$ of a reference point~$x_0$, i.e.
\begin{align*}
	Q \in C^3(U;\mathbb{R}^\nobs) , && R \in C^3(U;Z) .
\end{align*}
For a Banach space $W$, an open, convex subset $U\subset X$ and $f\in C^{k+1}(U;W)$ we denote the $k$-th Fr\'echet derivative at $x_0\in U$ evaluated in the direction $[h_1,\ldots,h_k]\in X^k$ by $\Fre_{x_0}^kf[h_1,\ldots,h_k]$. For $[h_1,\ldots,h_k]=[h,\ldots,h]$ we abbreviate $\Fre_{x_0}^kf[h]$ and we write $\Fre_{x_0}f[h]=\Fre_{x_0}^1f[h]$. We recall that $\Fre_{x_0}^kf\in \mathcal{L}^{(k)}(U;W)$, i.e., $\Fre_{x_0}^kf$ is a bounded $k$-linear mapping on $X^k$. The Taylor expansion of $f$ at $x_0\in U$ in direction $h\in X$ with $x_0+h\in U$ is given by
\[
f(x_0+h)
=
f(x_0)
+
\Fre_{x_0}f[h]
+
\ldots
+
\frac{1}{k!}\Fre_{x_0}^kf[h]
+
R_k(x_0,h)
\]
with remainder
\[
R_k(x_0,h)
=
\int_0^1
\frac{(1-t)^k}{k!}\Fre_{x_0+th}^{k+1}f[h]
\dd t.
\]
The assumption $f\in C^{k+1}(U;W)$ implies
\[
\|\Fre_{x_0+th}^{k+1}f\|_{\mathcal{L}^{(k+1)}(U;\mathbb{R})}
\leq
\|f\|_{C^{k+1}(U;W)}
<
\infty
\]
for all $t\in [0,1]$, implying
\[
\|R(x_0,h)\|_W\leq\frac{\|f\|_{C^{k+1}(U;W)}}{(k+1)!}\|h\|_X^{k+1}
\]
and thus $R(x_0,h)\in\LandauO(\|h\|_X^{k+1})$ for all $x_0\in U$, $h\in X$ with $x_0+h\in U$, cf., e.g., \cite[Chapter 4.6]{Zei1986}.

The following lemma is an immediate consequence and we state it for later reference but without proof.
\begin{lemma}\label{lem:doubletaylor}
Let $X,W$ Banach spaces, $U\subset X$ an open convex subset, and $f\in C^{k+1}(U;W)$. 
Let $x_0\in U$, $h,g\in X$ and $x_0+h,x_0+g\in U$. 
Then there holds
\[
f(x_0+h)
=
f(x_0+g)
+
\Fre_{x_0}f[h-g]
+
\ldots
+
\frac{1}{k!}\Fre_{x_0+g}^kf[h-g]
+
R_k(x_0+g,h-g)
\]
with
\begin{align*}
\|R_k(x_0+g,h-g)\|_W
&\leq
\frac{\|f\|_{C^{k+1}(U;W)}}{(k+1)!}\|h-g\|_X^{k+1}\\
&\leq
\frac{C_k\|f\|_{C^{k+1}(U;W)}}{(k+1)!}\max\{\|h\|_X,\|g\|_X\}^{k+1}
\end{align*}
and thus
\[
R_k(x_0+g,h-g)
=
\LandauO\Big(\max\{\|h\|_X,\|g\|_X\}^{k+1}\Big),
\]
with the constant in the $\LandauO$-notation only depending on $f$ and $k$.
\end{lemma}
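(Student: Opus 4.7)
The plan is to derive the expansion by a direct application of the Taylor formula recalled immediately before the lemma, taking the reference point to be $x_0+g$ and the perturbation direction to be $h-g$, and then to obtain the two remainder bounds by elementary estimates.

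For the expansion itself, the key observation is that $U$ is open and convex and that both $x_0+g,x_0+h\in U$, so the line segment $\{x_0+g+t(h-g):t\in[0,1]\}$ lies in $U$; in particular, $(x_0+g)+(h-g)=x_0+h\in U$. Hence the Taylor formula with integral remainder, stated above the lemma, applies at the point $x_0+g$ in direction $h-g$, and produces exactly the expansion in the lemma together with
\[
R_k(x_0+g,h-g)=\int_0^1\frac{(1-t)^k}{k!}\Fre_{x_0+g+t(h-g)}^{k+1}f[h-g]\dif t.
\]

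For the first inequality, I bound the integrand pointwise using $\|\Fre_{x_0+g+t(h-g)}^{k+1}f\|_{\mathcal{L}^{(k+1)}(U;W)}\le\|f\|_{C^{k+1}(U;W)}$ from the definition of the $C^{k+1}$-norm, giving $\|\Fre_{x_0+g+t(h-g)}^{k+1}f[h-g]\|_W\le\|f\|_{C^{k+1}(U;W)}\|h-g\|_X^{k+1}$; integrating the factor $(1-t)^k/k!$ over $[0,1]$ supplies the denominator $(k+1)!$. For the second inequality, I apply the triangle inequality $\|h-g\|_X\le\|h\|_X+\|g\|_X\le 2\max\{\|h\|_X,\|g\|_X\}$ and raise to the $(k+1)$-th power, which yields the claimed bound with constant $C_k=2^{k+1}$. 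The $\LandauO$-statement then follows immediately, with the implicit constant depending only on $f$ (through $\|f\|_{C^{k+1}(U;W)}$) and on $k$ (through $C_k/(k+1)!$).

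There is no genuine obstacle here; the lemma is essentially a bookkeeping consequence of the Taylor formula recalled just above, recorded for later use when remainders have to be estimated uniformly in two perturbation parameters simultaneously. The only point worth flagging is that convexity of $U$ is precisely what allows the reference point to be shifted from $x_0$ to $x_0+g$ without leaving the domain on which $f$ is assumed to be of class $C^{k+1}$.
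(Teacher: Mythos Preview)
Your proof is correct and is precisely the argument the paper has in mind: the authors explicitly omit the proof, calling the lemma ``an immediate consequence'' of the Taylor formula with integral remainder recalled just above it. Your derivation---applying that formula at the shifted base point $x_0+g$ in direction $h-g$, using convexity of $U$ to keep the segment inside the domain, and then estimating $\|h-g\|_X$ via the triangle inequality with $C_k=2^{k+1}$---is exactly the intended one-line justification.
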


\section{Local sensitivity analysis for Bayesian inverse problems} \label{sec:perturbation}

\subsection{Perturbation model}
The common assumption for local sensitivity analysis is that the uncertainty of the data in the parameter space can be modeled as fluctuations around a reference value $x_0\in X$, i.e., as
\begin{equation}\label{eq:domainmap}
x_{\xi}(\omega)
=
x_0+\xi(\omega).
\end{equation}
\begin{remark}
We note that local sensitivity analysis often scales the perturbation $\xi$ by an additional parameter, i.e., $\xi(\omega)=\alpha\tilde{\xi}(\omega)$, yielding convergence rates in $\alpha$. We will see that for the Bayesian setting the unscaled version is preferable, yielding convergence rates in $\|\xi\|_X$.
\end{remark}
Assuming $Q\in C^4(X;\mathbb{R}^\nobs)$ and $R\in C^4(X;Z)$, the mappings of our Bayesian setting can be expressed as
\begin{subequations} \label{eq:series}
	\begin{align}
		Q_{x_{\xi}}
		=
		Q(x_{\xi})
		&= Q_{x_0} 
		+ \Fre_{x_0}Q[\xi]
		+ \frac{1}{2} \Fre_{x_0}^2Q[\xi]+ \frac{1}{6} \Fre_{x_0}^3Q[\xi]
		+ \LandauO(\|\xi\|_X^4), \\
		R_{x_{\xi}}
		=
		R(x_{\xi})
		&= R_{x_0} 
		+ \Fre_{x_0}R[\xi]
		+ \frac{1}{2} \Fre_{x_0}^2R[\xi]
		+ \frac{1}{6} \Fre_{x_0}^3R[\xi]
		+ \LandauO(\|\xi\|_X^4) . 
		\intertext{Accordingly, the likelihood $\nom \in C^4(X;\mathbb{R})$ can be expressed as}
		\nom_{x_{\xi}}
		=
		\nom(x_{\xi})
		&= \nom_{x_0} 
		+ \Fre_{x_0}\nom[\xi]
		+ \frac{1}{2} \Fre_{x_0}^2\nom[\xi]
		+ \frac{1}{6} \Fre_{x_0}^3\nom[\xi]
		+ \LandauO(\|\xi\|_X^4),
	\end{align}
\end{subequations}
where
\begin{subequations} \label{eq:nom}
	\begin{align} 
		\nom_{x_0} 
		&= \exp\Big(- \frac{1}{2} \|\data-Q_{x_0}\|^2_\Sigma\Big) , \label{eq:nom_0} \\
		\Fre_{x_0}\nom[\xi] 
		&= \nom_{x_0} ~\langle \data-Q_{x_0} , \Fre_{x_0}Q[\xi] \rangle_\Sigma , \label{eq:nom_d_0}\\
		\Fre_{x_0}^2\nom[\xi] 
		&= \nom_{x_0} \label{eq:nom_d2_0} 
		\Big( 
		\langle \data-Q_{x_0} , \Fre_{x_0}Q[\xi] \rangle_\Sigma^2 
		- \| \Fre_{x_0}Q[\xi] \|_\Sigma^2
		+ \langle \data-Q_{x_0}, \Fre_{x_0}^2Q[\xi] \rangle_\Sigma 
		\Big) , \\
		\Fre_{x_0}^3\nom[\xi] 
		&= \label{eq:nom_d3_0}
		\Fre_{x_0}^2\nom[\xi]~\langle \data-Q_{x_0}, \Fre_{x_0}Q[\xi]\rangle_\Sigma \\
		&\quad+ \nom_{x_0} \Big( 2 \langle \data-Q_{x_0}, \Fre_{x_0}Q[\xi]\rangle_\Sigma \Big( \langle \data - Q_{x_0}, \Fre_{x_0}^2Q[\xi]\rangle_\Sigma 
		- \| \Fre_{x_0}Q[\xi] \|^2_\Sigma \Big) \nonumber \\
		&\qquad\qquad -3 \langle \Fre_{x_0}Q[\xi], \Fre_{x_0}^2Q[\xi] \rangle_\Sigma
		 + \langle \data-Q_{x_0}, \Fre_{x_0}^3Q[\xi] \rangle_\Sigma \Big) \nonumber .
	\end{align}
\end{subequations}
Note that only the derivatives \cref{eq:nom_d_0,eq:nom_d2_0,eq:nom_d3_0} depend on the random variable $\xi$.
To simplify presentation we drop the argument $[\xi]$ from our notation.

\subsection{Expansions for first and second moments}
Inserting the series expansions \eqref{eq:series} into the posterior moments \eqref{eq:voi:exppost} and \eqref{eq:voi:cov_post}, we can formulate series approximations for these quantities of interest. In the following we do this up to second order, noting that there is no obstruction other than additional regularity requirements to go even higher order.
\begin{theorem} \label{th:approximations3}
	Let $\xi \in L_{\prior{}}^3(\Omega;X)$, $Q \in C^3(X;\mathbb{R}^\nobs)$, and $R \in C^3(X;Z)\cap L^2_{\prior{\xi}}(X;Z)$. Then there holds
\begin{subequations} \label{eq:approx3}
\begin{align}
	\mean_\posterior{}[ R_{x_{\xi}} ]
	&= R_{x_0} 
    + \mean[\Fre_{x_0}R] \label{eq:approx3:exp_R} \\ 
	&\quad + \frac{1}{2} \bigg( \mean[\Fre_{x_0}^2R] 
    + 2 \Cov[\Fre_{x_0}R, \langle\data-Q_{x_0}, \Fre_{x_0}Q\rangle_\Sigma ] \nonumber \bigg) \nonumber \\ 
    &\quad + \LandauO(\|\xi\|_{L_{\prior{}}^3(\Omega;X)}^3) \nonumber , \displaybreak[0] \\
	\Cor_\posterior{}[ R_{x_{\xi}} ]
	&= R_{x_0} \otimes R_{x_0} 
    + \bigg(\mean[\Fre_{x_0}R] \otimes R_{x_0} + R_{x_0} \otimes \mean[\Fre_{x_0}R]\bigg) \label{eq:approx3:exp_RxR} \\
	& \quad + \frac{1}{2} \bigg( \mean[\Fre_{x_0}^2R] \otimes R_{x_0} 
    + R_{x_0} \otimes \mean[\Fre_{x_0}^2R] 
	+ 2 \Cor[\Fre_{x_0}R] \nonumber \\
    & \qquad + 2 \Cov[\Fre_{x_0}R \otimes R_{x_0} + R_{x_0} \otimes \Fre_{x_0}R,\langle\data-Q_{x_0}, \Fre_{x_0}Q\rangle_\Sigma] \bigg) \nonumber \\ 
	&\quad + \LandauO(\|\xi\|_{L_{\prior{}}^3(\Omega;X)}^3) \nonumber, \displaybreak[0] \\
    \Cov_\posterior{} [R_{x_{\xi}}] 
	&= \Cov[\Fre_{x_0}R] 
    + \LandauO(\|\xi\|_{L_{\prior{}}^3(\Omega;X)}^3) \label{eq:approx3:cov_R}.
\end{align}
\end{subequations}
\end{theorem}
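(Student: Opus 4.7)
The overall strategy is to substitute the Taylor expansions \eqref{eq:series} into the Radon--Nikodým formula \eqref{eq:posterior} and collect terms order by order in $\|\xi\|_X$. The three claims then reduce to careful multilinear bookkeeping.

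For \eqref{eq:approx3:exp_R} I would apply \cref{lem:doubletaylor} not to $R$ and $\nom$ separately but to the product $R\cdot\nom$; the Leibniz rule for Fréchet derivatives yields a second-order expansion of $R_{x_\xi}\nom_{x_\xi}$ around $x_0$ whose pointwise remainder is $\LandauO(\|\xi\|_X^3)$. Taking expectations, Hölder's inequality lifts this to an $\LandauO(\|\xi\|_{L^3_\prior{}}^3)$ bound in $L^1$. The same procedure applied to $\nom$ alone gives $\mean[\nom_{x_\xi}] = \nom_{x_0} + \mean[\Fre_{x_0}\nom] + \tfrac12\mean[\Fre_{x_0}^2\nom] + \LandauO(\|\xi\|_{L^3_\prior{}}^3)$. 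Dividing via the geometric series $1/(1+u) = 1-u+u^2+\LandauO(u^3)$ with $u=(\mean[\nom_{x_\xi}]-\nom_{x_0})/\nom_{x_0} = \LandauO(\|\xi\|_{L^1_\prior{}})$, the factor $\nom_{x_0}$ cancels between numerator and denominator. The decisive simplification is \eqref{eq:nom_d_0}, which gives $\Fre_{x_0}\nom/\nom_{x_0} = \langle\data-Q_{x_0},\Fre_{x_0}Q\rangle_\Sigma$, so the cross-term $\mean[\Fre_{x_0}R\cdot\Fre_{x_0}\nom]/\nom_{x_0} - \mean[\Fre_{x_0}R]\,\mean[\Fre_{x_0}\nom]/\nom_{x_0}$ assembles into exactly $\Cov[\Fre_{x_0}R,\langle\data-Q_{x_0},\Fre_{x_0}Q\rangle_\Sigma]$. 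The remaining order-two contributions from $\Fre_{x_0}^2\nom$ cancel against the matching pieces obtained by expanding the denominator, leaving only $\tfrac12\mean[\Fre_{x_0}^2R]$.

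For \eqref{eq:approx3:exp_RxR} I would repeat the previous argument verbatim with the tensor-valued prediction function $S:=R\otimes R$, using the Leibniz rule
\begin{align*}
\Fre_{x_0}S &= \Fre_{x_0}R\otimes R_{x_0} + R_{x_0}\otimes\Fre_{x_0}R, \\
\Fre_{x_0}^2 S &= \Fre_{x_0}^2R\otimes R_{x_0} + R_{x_0}\otimes\Fre_{x_0}^2R + 2\,\Fre_{x_0}R\otimes\Fre_{x_0}R.
\end{align*}
The extra summand $2\,\Fre_{x_0}R\otimes\Fre_{x_0}R$ in $\Fre_{x_0}^2 S$, after the prefactor $\tfrac12$, contributes precisely $\Cor[\Fre_{x_0}R]$, which is the only new term compared to \eqref{eq:approx3:exp_R}; the covariance-type term becomes $\Cov[\Fre_{x_0}R\otimes R_{x_0}+R_{x_0}\otimes\Fre_{x_0}R,\,\langle\data-Q_{x_0},\Fre_{x_0}Q\rangle_\Sigma]$. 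Finally, \eqref{eq:approx3:cov_R} follows from $\Cov_\posterior{}[R_{x_\xi}] = \Cor_\posterior{}[R_{x_\xi}] - \mean_\posterior{}[R_{x_\xi}]\otimes\mean_\posterior{}[R_{x_\xi}]$: tensoring \eqref{eq:approx3:exp_R} with itself and truncating at order two, every term except $\mean[\Fre_{x_0}R]\otimes\mean[\Fre_{x_0}R]$ cancels with the corresponding term of \eqref{eq:approx3:exp_RxR}, so what remains is $\Cor[\Fre_{x_0}R]-\mean[\Fre_{x_0}R]\otimes\mean[\Fre_{x_0}R]=\Cov[\Fre_{x_0}R]$.

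The main obstacle is not conceptual but organizational: the calculation must be arranged so that the remainder of every product stays $\LandauO(\|\xi\|_X^3)$ pointwise, since multiplying two independent second-order Taylor expansions naively produces cross terms of order $\|\xi\|_X^4$ and would require $\xi\in L^4_\prior{}$. Applying \cref{lem:doubletaylor} to the products $R\cdot\nom$ and $(R\otimes R)\cdot\nom$ \emph{directly} is therefore essential for matching the $L^3_\prior{}$ regularity hypothesis, and similarly one must truncate the geometric series for $1/\mean[\nom_{x_\xi}]$ at a point where the discarded tail is controlled by $\|\xi\|_{L^3_\prior{}}^3$ via Jensen's inequality.
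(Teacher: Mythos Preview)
Your proposal is correct and arrives at exactly the same identities as the paper. The organization differs: the paper Taylor-expands the full posterior integrand $R\,\nom/\mean[\nom]$ as a single function of the perturbation (computing the first and second Fr\'echet derivatives of the quotient via the product and quotient rules, then taking expectations term by term), whereas you expand $\mean[R\nom]$ and $\mean[\nom]$ separately and combine them through a truncated geometric series. Both routes produce the same cancellations---in particular, all second-order contributions from $\Fre_{x_0}^2\nom$ drop out, and the surviving cross-term assembles into the covariance with $\langle\data-Q_{x_0},\Fre_{x_0}Q\rangle_\Sigma$ exactly as you describe. Your approach is slightly more elementary but requires the extra care you flag about truncating the geometric series so that the discarded tail is controlled by $\|\xi\|_{L^3_{\prior{}}}^3$; the paper's direct differentiation of the quotient sidesteps this because the Taylor remainder of the ratio is automatically of the right order. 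For \eqref{eq:approx3:exp_RxR} and \eqref{eq:approx3:cov_R} your reduction to $S=R\otimes R$ and the subtraction argument coincide with the paper's treatment.
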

\begin{proof}
    To show the assertion for the mean \eqref{eq:approx3:exp_R}, we need to calculate the derivatives of
\begin{align*}
	R_{x_{\xi}} \frac{\nom_{x_{\xi}}}{\mean[\nom_{x_{\xi}}]}\in C^3(X;\mathbb{R})
\end{align*}
with respect to $\xi$ and then take the mean. This leads to
\begin{align*}
	\Fre_{x_0} \bigg( R \frac{\nom}{\mean[\nom]} \bigg)
	&= (\Fre_{x_0}R) \frac{\nom_{x_0}}{\mean[\nom_{x_0}]} 
	+ R_{x_0} \frac{\Fre_{x_0}\nom}{\mean[\nom_{x_0}]} 
	- R_{x_0} \frac{\nom_{x_0}(\Fre_{x_0}\mean[\nom])}{\mean[\nom_{x_0}]^2} , \displaybreak[0] \\
	\Fre_{x_0}^2\bigg( R \frac{\nom}{\mean[\nom]} \bigg)
	&= (\Fre_{x_0}^2R) \frac{\nom_{x_0}}{\mean[\nom_{x_0}]}
	+ 2 (\Fre_{x_0}R) \frac{\Fre_{x_0}\nom}{\mean[\nom_{x_0}]} 
	- 2 (\Fre_{x_0}R) \frac{\nom_{x_0} (\Fre_{x_0} \mean[\nom])}{\mean[\nom_{x_0}]^2} \\
	& \qquad + R_{x_0} \frac{\Fre_{x_0}^2\nom}{\mean[\nom_{x_0}]} 
	- 2 R_{x_0} \frac{(\Fre_{x_0}\nom) (\Fre_{x_0}\mean[\nom])}{\mean[\nom_{x_0}]^2} \\  
    & \qquad - R_{x_0} \frac{\nom_{x_0} (\Fre_{x_0}^2\mean[\nom])}{\mean[\nom_{x_0}]^2} 
	+ 2 R_{x_0} \frac{\nom_{x_0} (\Fre_{x_0}\mean[\nom])^2}{\mean[\nom_{x_0}]^3} . 
\end{align*}
Now, since the $k$-th Fr\'echet derivative acts as a bounded $k$-linear operator in its directions of evaluation there holds
\begin{align*}
    \Fre_{x_0}^k\mean[\nom]
    &= \Fre_{x_0}^k \int \limits_X \nom \dif \prior{\xi} 
    = \int \limits_X \Fre_{x_0}^k\nom \dif \prior{\xi}
    = \mean[\Fre_{x_0}^k\nom]
    \qquad
    k=1,2,
\end{align*}
and $\xi \in L^2_{\prior{}}(\Omega)$ implies
\begin{align*}
	\big|\mean[\Fre_{x_0}^k\nom]\big|
	\leq
	\mean\big[|\Fre_{x_0}^k\nom|\big]
	\leq
	\|\Fre_{x_0}^k\nom\|_{\mathcal{L}^{(k)}(X;\mathbb{R})}\|\xi\|_{L_{\prior{}}^k(\Omega;X)}
	<\infty,
	\qquad
	k=1,2,
\end{align*}
i.e., these quantities are well defined. Further, \eqref{eq:nom_0} implies $\mean[\nom_{x_0}] = \nom_{x_0}$, such that applying the mean, exploiting $\Fre_{x_0}\mean[\nom]=\mean[\Fre_{x_0}\nom]$, and \eqref{eq:nom_d_0}, we get
\begin{align*}
	\mean_\posterior{}[ \Fre_{x_0} R ] 
	&= \mean \bigg[ (\Fre_{x_0} R ) \frac{\nom_{x_0}}{\mathbb{E}[\nom_{x_0}]} \bigg]
	+ \mean \bigg[ R_{x_0} \frac{\Fre_{x_0} \nom}{\mathbb{E}[\nom_{x_0}]} \bigg] 
	- \mean \bigg[ R_{x_0} \frac{\nom_{x_0} (\Fre_{x_0} \mathbb{E}[\nom])}{\mathbb{E}[\nom_{x_0}]^2} \bigg] \\
	&= \mean [ \Fre_{x_0} R ],
\intertext{and}
	\mean_\posterior{}[ \Fre_{x_0}^2 R ]
	&= \mean\bigg[ (\Fre_{x_0}^2R) \frac{\nom_{x_0}}{\mean[\nom_{x_0}]} \bigg]
	+ 2 \mean\bigg[ (\Fre_{x_0}R) \frac{\Fre_{x_0}\nom}{\mean[\nom_{x_0}]} \bigg] \\
	&\qquad - 2 \mean\bigg[ (\Fre_{x_0}R) \frac{\nom_{x_0} (\Fre_{x_0} \mean[\nom])}{\mean[\nom_{x_0}]^2} \bigg] \\
	& \qquad + \mean\bigg[ R_{x_0} \frac{\Fre_{x_0}^2\nom}{\mean[\nom_{x_0}]} \bigg] 
	- 2 \mean\bigg[ R_{x_0} \frac{(\Fre_{x_0}\nom) (\Fre_{x_0}\mean[\nom])}{\mean[\nom_{x_0}]^2} \bigg] \\  
    & \qquad - \mean\bigg[ R_{x_0} \frac{\nom_{x_0} (\Fre_{x_0}^2\mean[\nom])}{\mean[\nom_{x_0}]^2} \bigg] 
	+ 2 \mean\bigg[ R_{x_0} \frac{\nom_{x_0} (\Fre_{x_0}\mean[\nom])^2}{\mean[\nom_{x_0}]^3} \bigg] \\
	&= \mean[ \Fre_{x_0}^2R ]
	+ 2 \mean\bigg[ (\Fre_{x_0}R) \frac{\Fre_{x_0}\nom}{\nom_{x_0}} \bigg] 
	- 2 \mean\bigg[ (\Fre_{x_0}R) \frac{\Fre_{x_0} \mean[\nom]}{\nom_{x_0}} \bigg] \\
	&= \mean[ \Fre_{x_0}^2R ]
	+ 2 \mean[ (\Fre_{x_0}R) \langle \data-Q_{x_0}, \Fre_{x_0}Q \rangle_\Sigma ] \\
	&\qquad - 2 \mean[ \Fre_{x_0}R ] \langle \data-Q_{x_0}, \mean[ \Fre_{x_0}Q ] \rangle_\Sigma \\
	&= \mean[ \Fre_{x_0}^2R ]
	+ 2 \Cov[ \Fre_{x_0}R, \langle \data-Q_{x_0}, \Fre_{x_0}Q \rangle_\Sigma ] . 
\end{align*}
Thus by applying the mean to the Taylor expansion of $\mean_{\posterior{}}[R_{x_{\xi}}]$ we get \eqref{eq:approx3:exp_R}.
The approximation \eqref{eq:approx3:exp_RxR} is obtained by considering $\tilde{R}_\xi=R_{x_{\xi}}\otimes R_{x_{\xi}}$ and the approximation \eqref{eq:approx3:cov_R} is found when deducting the approximation of $\mean_\posterior{}[R_{x_{\xi}}]\otimes \mean_\posterior{}[R_{x_{\xi}}]$ from \eqref{eq:approx3:exp_RxR}.
\end{proof}
The expressions simplify further if the prior distribution is centered and skewless and we have some additional smoothness available.
\begin{corollary} \label{th:approximations4}
	Let the assumptions of \cref{th:approximations3} hold and assume that $\xi$ is centered, i.e., $\mean[\xi] = 0$. Then there holds
\begin{subequations} \label{eq:approx4}
\begin{align}
	\mean_\posterior{}[ R_{x_{\xi}} ]
	&= R_{x_0} \label{eq:approx4:exp_R} \\
	&\quad+ \frac{1}{2} \bigg( \mean[\Fre_{x_0}^2R] 
    + 2 \Cor[\Fre_{x_0}R , \langle\data-Q_{x_0}, \Fre_{x_0}Q\rangle_\Sigma] \bigg) \nonumber \\
    &\quad+ \LandauO(\|\xi\|_{L_{\prior{}}^3(\Omega;X)}^3) \nonumber \displaybreak[0] \\ 
	\Cor_\posterior{}[ R_{x_{\xi}} ]
	&= R_{x_0} \otimes R_{x_0} \label{eq:approx4:exp_RxR} \\
	&\quad + \frac{1}{2} \bigg( \mean[\Fre_{x_0}^2R] \otimes R_{x_0} 
    + R_{x_0} \otimes \mean[\Fre_{x_0}^2R] \nonumber \\ 
	&\qquad\quad + 2 \Cor[\Fre_{x_0}R ] \nonumber \\
	&\qquad\quad + 2 \Cor[(\Fre_{x_0}R \otimes R_{x_0} + R_{x_0} \otimes \Fre_{x_0}R) , \langle\data-Q_{x_0}, \Fre_{x_0}Q\rangle_\Sigma] \bigg) \nonumber \\  
    &\quad + \LandauO(\|\xi\|_{L_{\prior{}}^3(\Omega;X)}^3) \nonumber , \displaybreak[0] \\
	\Cov_\posterior{} [R_{x_{\xi}}] 
	&= \Cor[\Fre_{x_0}R] + \LandauO(\|\xi\|_{L_{\prior{}}^3(\Omega;X)}^3) \label{eq:approx4:cov_R} .
\end{align}
\end{subequations}
If $\xi \in L^4_{\prior{}}(\Omega;X)$ and $\xi$ is centered and additionally skewless, i.e., $\mean[\xi\otimes\xi\otimes\xi] = 0$, $Q \in C^4(X;\mathbb{R}^\nobs)$, and $R\in C^4(X;Z) \cap L^2_{\prior{\xi}}(X;Z)$, then the error term improves to $\LandauO(\|\xi\|_{L_{\prior{}}^4(\Omega;X)}^4)$.
\end{corollary}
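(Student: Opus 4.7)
The plan is to derive the first assertion by directly specializing \cref{th:approximations3}, and to obtain the second by pushing the Taylor expansion underlying \cref{th:approximations3} one order further. For the first claim, the key observation is that $\Fre_{x_0}R[\xi]$ and $\Fre_{x_0}Q[\xi]$ are linear in $\xi$, so that $\mean[\xi]=0$ immediately yields $\mean[\Fre_{x_0}R]=0$ and $\mean[\Fre_{x_0}Q]=0$. Since $\Fre_{x_0}R$ has zero mean, the covariance appearing in \eqref{eq:approx3:exp_R} collapses to the correlation, and likewise for its $Z$-valued analogue in \eqref{eq:approx3:exp_RxR}. Substituting these identities into \eqref{eq:approx3} then yields \eqref{eq:approx4:exp_R}--\eqref{eq:approx4:cov_R}.

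For the second claim, the plan is to write
\[
\mean_{\posterior{}}[R_{x_\xi}] = \frac{\mean[R_{x_\xi}\,\nom_{x_\xi}]}{\mean[\nom_{x_\xi}]}
\]
and Taylor-expand numerator and denominator separately to fourth order, which is legitimate under $Q, R \in C^4$ and $\xi \in L^4_{\prior{}}(\Omega;X)$. By the Leibniz rule, the $k$-th order contribution to the expansion of $R\nom$ in powers of $\xi$ is a symmetric $k$-linear expression in $\xi$. Exchanging expectation with each bounded multilinear map, which is admissible by the $L^4$ integrability, produces its action on the moment tensor $\mean[\xi^{\otimes k}]$. Under the hypotheses $\mean[\xi]=0$ and $\mean[\xi\otimes\xi\otimes\xi]=0$, all first- and third-order contributions to both $\mean[R_{x_\xi}\nom_{x_\xi}]$ and $\mean[\nom_{x_\xi}]$ therefore vanish. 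Expanding $1/\mean[\nom_{x_\xi}]$ as a geometric series and multiplying by the expanded numerator leaves no cubic contribution to $\mean_{\posterior{}}[R_{x_\xi}]$, so the remainder improves to $\LandauO(\|\xi\|^4_{L^4_{\prior{}}(\Omega;X)})$. The correlation formula is handled by applying the same argument to $\tilde R_\xi = R_{x_\xi}\otimes R_{x_\xi}$, which still lies in $C^4 \cap L^2_{\prior{\xi}}$, and the covariance then follows by subtraction.

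The main obstacle I anticipate is the bookkeeping needed to confirm that each new third-order term -- whether arising from the Leibniz expansion of $\Fre_{x_0}^3(R\nom)$, from mixed products such as $\Fre_{x_0}^1 R \cdot \Fre_{x_0}^2 \nom$ inside it, or from the cubic part of the geometric series for $1/\mean[\nom_{x_\xi}]$ -- is indeed a symmetric trilinear form in $\xi$ and hence annihilated by the skewlessness assumption. Once this polynomial-in-$\xi$ structure is made explicit for each Taylor coefficient, the remainder of the argument reduces to the same algebra already carried out in \cref{th:approximations3}.
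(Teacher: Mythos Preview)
Your proposal is correct and rests on the same core observation as the paper: every $k$-th order Taylor coefficient is a bounded $k$-linear form in $\xi$, so its expectation is determined by $\mean[\xi^{\otimes k}]$ and vanishes when that moment tensor does. For the first assertion your specialization of \cref{th:approximations3} via $\mean[\Fre_{x_0}R]=\mean[\Fre_{x_0}Q]=0$ is exactly what the paper uses.

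For the improved error term the paper takes a slightly different route: rather than expanding numerator and denominator separately and invoking a geometric series for $1/\mean[\nom_{x_\xi}]$, it computes $\Fre_{x_0}^3\big(R\,\nom/\mean[\nom]\big)$ directly via repeated quotient rule and then checks that every resulting summand is trilinear in $\xi$ (including the mixed terms such as $(\Fre_{x_0}^2 R)\,\Fre_{x_0}\nom$ and those involving $\Fre_{x_0}\mean[\nom]$). Your separation of numerator and denominator is arguably tidier, since the trilinearity of $\Fre_{x_0}^3(R\nom)$ is immediate from the Leibniz rule and the only cross-term to track in the geometric series is $-b_3 + 2b_1 b_2 - b_1^3$, which vanishes once $b_1=b_3=0$. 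The paper's approach, on the other hand, avoids having to justify the geometric-series manipulation and keeps everything as a single Taylor expansion of the posterior integrand. Either way the bookkeeping you flag as the main obstacle is the entire content of the argument, and you have identified the right mechanism for handling it.
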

\begin{proof}
To show the assertion \eqref{eq:approx4:exp_R} for the mean, continuing the derivations of \cref{th:approximations3}, we get
\begin{align*}
	\Fre_{x_0}^3\bigg( R \frac{\nom}{\mean[\nom]} \bigg)
	&= (\Fre_{x_0}^3R) \frac{\nom_{x_0}}{\mean[\nom_{x_0}]} 
	+ 3 (\Fre_{x_0}^2R) \frac{\Fre_{x_0}\nom}{\mean[\nom_{x_0}]}
	- 3 (\Fre_{x_0}^2R) \frac{\nom_{x_0} (\Fre_{x_0}\mean[\nom])}{\mean[\nom_{x_0}]^2} \\
	&\qquad + 3 (\Fre_{x_0}R) \frac{\Fre_{x_0}^2\nom}{\mean[\nom_{x_0}]} 
    - 6 (\Fre_{x_0}R) \frac{(\Fre_{x_0}\nom) (\Fre_{x_0}\mean[\nom])}{\mean[\nom_{x_0}]^2} \\
	&\qquad - 3 (\Fre_{x_0}R) \frac{\nom_{x_0} (\Fre_{x_0}^2\mean[\nom])}{\mean[\nom_{x_0}]^2}
	+ 6 (\Fre_{x_0}R) \frac{\nom_{x_0} (\Fre_{x_0}\mean[\nom])^2}{\mean[\nom_{x_0}]^3} \\
	&\qquad + R_{x_0} \frac{\Fre_{x_0}^3\nom}{\mean[\nom_{x_0}]} 
	- 3 R_{x_0} \frac{(\Fre_{x_0}^2\nom) (\Fre_{x_0}\mean[\nom])}{\mean[\nom_{x_0}]^2} \\
	&\qquad - 3 R_{x_0} \frac{(\Fre_{x_0}\nom) (\Fre_{x_0}^2\mean[\nom])}{\mean[\nom_{x_0}]^2} 
	+ 6 R_{x_0} \frac{(\Fre_{x_0}\nom) (\Fre_{x_0}\mean[\nom])^2}{\mean[\nom_{x_0}]^3} \\
	&\qquad - R_{x_0} \frac{\nom_{x_0} \Fre_{x_0}^3\mean[\nom]}{\mean[\nom_{x_0}]^2} 
	- 6 R_{x_0} \frac{\nom_{x_0} (\Fre_{x_0}\mean[\nom])^3}{\mean[\nom_{x_0}]^4} \\
	&\qquad + 6 R_{x_0} \frac{\nom_{x_0} (\Fre_{x_0}\mean[\nom]) (\Fre_{x_0}^2\mean[\nom])}{\mean[\nom_{x_0}]^3} .
\end{align*}
Again, since the Fr\'echet derivative acts as a linear operator in its direction of evaluation, $\mean[\xi]=0$ implies $\mean[\Fre_{x_0}R]=0$ and $\mean[\Fre_{x_0}Q]=0$, and thus also $\mean[\Fre_{x_0}\nom]=0$. Since the third Fr\'echet derivative acts as a trilinear operator in its directions of evaluation, the same arguments as in \cref{th:approximations3} yield $\mean[\Fre_{x_0}^3R]=0$ and $\mean[\Fre_{x_0}^3\nom]=0$.
Similarly, the multiplication of second derivatives (bilinear in direction of evaluation) and first derivatives (linear) is also trilinear in the direction of evaluation, so that we find
\begin{align*}
	\mean\bigg[ (\Fre_{x_0}^2R) \frac{\Fre_{x_0}\nom}{\mean[\nom_{x_0}]} \bigg] = 0 , &&
	\mean\bigg[ (\Fre_{x_0}R) \frac{\Fre_{x_0}^2\nom}{\mean[\nom_{x_0}]} \bigg] = 0 .
\end{align*}
Thus first- and third-order terms vanish under the assumptions of the corollary. The assertions for the correlation \eqref{eq:approx4:exp_RxR} and covariance \eqref{eq:approx4:cov_R} follow in complete analogy.
\end{proof}

\subsection{Comparison of the Perturbation Approximation to the Laplace Approximation}

\newcommand{\priord}{f}
\newcommand{\postd}{f^\delta}

In the following we compare our perturbation approach to the Laplace approximation, which is also based on a second-degree Taylor approximation.
To this end, let $X = \mathbb{R}^d$ and assume that the prior probability is given by a density $\priord(x): \mathbb{R}^d \to [0,\infty)$ with $S= \supp(\priord)$.
Let the posterior probability be unimodal and given by the posterior density
\begin{align*}
	\postd(x) 
	= \begin{cases}
	\frac{\exp(-\Phi(x)) \priord(x)}{\int_X \exp(-\Phi(x)) f(x) \dif x} = \frac{\exp(-I(x))}{\int_X \exp(-I(x)) \dif x} ,
	& x \in S , \\
	0 ,
	& \text{else} ,
	\end{cases}
\end{align*}
with $I:S \to \mathbb{R}$ the negative log-posterior density
\begin{align*}
	 I(x) = \Phi(x) - \log(\priord(x)).
\end{align*}
We assume $\Phi,f \in C^2(S)$, immediately implying also $I \in C^2(S)$.
Since the posterior density is unimodal the MAP point is given by
\begin{align*}
	x^\MAP = \argmax \limits_{x \in S} \postd(x) = \argmin \limits_{x \in S} I(x) .
\end{align*}
If we assume that $ x^{\MAP} $ is in the interior of $ S $, then $ \Fre_{x^{\MAP}} I[x] = 0 $. Moreover, since $ X $ is finite-dimensional, we can represent the second derivative $ \Fre_{x^{\MAP}}^2 I[x] $ as a Hessian matrix $ \nabla^2_{x^{\MAP}} I $. We assume that $ \nabla^2_{x^{\MAP}} I $ is positive definite.
The Laplace approximation is derived by replacing $I$ with its second-degree Taylor approximation, cf.~\cite{TK1986}, i.e.,
\begin{align*}
	\frac{\exp(-I(x))}{\int_X \exp(-I(x)) \dif x} 
	&\approx \frac{\exp(-I_{x^\MAP}-\tfrac{1}{2} x^\intercal (\nabla^2_{x^\MAP} I)x)}{\int_X \exp(-I_{x^\MAP}-\tfrac{1}{2} x^\intercal(\nabla^2_{x^\MAP} I) x) \dif x} 
	= \frac{\exp(-\tfrac{1}{2} x^\intercal(\nabla^2_{x^\MAP} I) x)}{\sqrt{(2\pi)^d \det(\nabla^2_{x^\MAP} I))}} .
\end{align*}
Thus, the Laplace approximation of $\postd$ is the Gaussian distribution \newline$\mathcal{N}(x^\MAP, (\nabla^2_{x^\MAP} I)^{-1})$.

Comparing the Laplace and the perturbation approximation of \cref{th:approximations3}, we can draw the following comparisons:
\begin{enumerate}
	\item The point of reference of the Laplace approximation's Taylor expansion is the (unique) MAP point, while the point of reference of the perturbation approximation can be any point in the support of the prior.
	\item The Laplace approximation works well when the posterior is close to a multivariate Gaussian distribution. In contrast, the accuracy of the perturbation approach can be arbitrarily increased by taking more derivatives into account.
	\item The Laplace approximation is a distribution, such that it requires additional sampling for the computation of statistical quantities of interest. On the other hand, the perturbation approach gives us a direct approximation of these quantities.
\end{enumerate}
Thus, while there are conceptual similarities, the perturbation approach from this article has a larger area for application.

\section{Iterative improvement of the reference value}\label{sec:impr}

\subsection{Iterative approximation of the posterior mean}\label{sec:itimpr}
The estimates of the previous section show that the reference point $x_0$ (and thus $R_{x_0}=R(x_0)$) for the series approximations is one of the most influential parameters for local sensitivity analysis for Bayesian inverse problems. For local sensitivity analysis for the forward problem, $x_0$ is often considered as an inherent property of the perturbation model \eqref{eq:domainmap}, which is specified by $x_0$ and the distribution of $\xi$. However, we may change $x_0$ by introducing a bias in the probability distribution of $\xi$, i.e., for any $\tilde{x}_0$ we may write
\begin{equation}\label{eq:introducebias}
	x_{\xi}(\omega)
	= x_0 + \xi(\omega)
	= \tilde{x}_0 + \big(\xi(\omega) + x_0-\tilde{x}_0\big)
	= \tilde{x}_0 + \tilde{\xi}(\omega) .
\end{equation}
We thus have changed the point of reference. The asymptotic expansions for the statistical quantities of interest from \cref{sec:perturbation} hold verbatim, with the moments in the expansions now taken with respect to the probability measure of $\tilde{\xi}$. The question is how we can exploit this flexibility in choosing the reference point to improve our asymptotic expansions. The starting point for our considerations is the following corollary.

\begin{corollary} \label{cor:approximationlinear}
Under the assumptions of \cref{th:approximations3} there holds
\begin{equation} \label{eq:approx3:exp_chi}
    \mean_\posterior{}[x_{\xi}]
    = x_0 
    + \mean[\xi] 
    + \Cov[\xi,\langle \data - Q_{x_0},\Fre_{x_0}Q[\xi]\rangle_\Sigma] 
    + \LandauO(\|\xi\|_{L_{\prior{}}^3(\Omega;X)}^3) .
\end{equation}
\end{corollary}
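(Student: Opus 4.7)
The plan is to obtain \eqref{eq:approx3:exp_chi} as a direct specialization of the first-moment expansion \eqref{eq:approx3:exp_R} of \cref{th:approximations3} to the specific prediction function $R = \Id_X$, i.e., the identity on the parameter space. The claim only concerns the posterior mean, so the Hilbert space hypothesis on the range of $R$ in \cref{th:approximations3} is inessential here; as noted in the remark preceding the subsection on Taylor expansions, this hypothesis was introduced solely to make sense of the tensor products appearing in correlation and covariance, and plays no role for expected values. The integrability requirement $R \in L^2_{\prior{\xi}}(X;X)$ reduces to $\xi \in L^2_{\prior{}}(\Omega;X)$, which is weaker than the standing assumption $\xi \in L^3_{\prior{}}(\Omega;X)$ of \cref{th:approximations3}.

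The second step is to read off the Fr\'echet derivatives. Since the identity map is affine, its Taylor expansion at $x_0$ terminates after the linear term, $R(x_0+h) = x_0 + h$, which yields $R_{x_0} = x_0$, $\Fre_{x_0} R[h] = h$, and $\Fre_{x_0}^k R[h] = 0$ for every $k \geq 2$. In particular $\Fre_{x_0} R[\xi] = \xi$ and $\mean[\Fre_{x_0}^2 R] = 0$ as elements of $X$, so the Taylor coefficients reduce to trivially computable quantities.

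Substituting these identities into \eqref{eq:approx3:exp_R} yields
\begin{align*}
\mean_\posterior{}[x_\xi]
&= x_0 + \mean[\xi] + \tfrac{1}{2}\bigl(0 + 2\,\Cov[\xi,\,\langle \data - Q_{x_0}, \Fre_{x_0} Q[\xi]\rangle_\Sigma]\bigr) + \LandauO\bigl(\|\xi\|^3_{L^3_{\prior{}}(\Omega;X)}\bigr) \\
&= x_0 + \mean[\xi] + \Cov[\xi,\,\langle \data - Q_{x_0}, \Fre_{x_0} Q[\xi]\rangle_\Sigma] + \LandauO\bigl(\|\xi\|^3_{L^3_{\prior{}}(\Omega;X)}\bigr),
\end{align*}
which is exactly the asserted identity. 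There is no genuine obstacle in the argument; the only point that deserves explicit mention is the harmless relaxation of the Hilbert space assumption, which is justified because only a first moment is being expanded and no tensor products appear in the statement.
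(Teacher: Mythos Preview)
Your proof is correct and follows exactly the paper's approach: the paper's one-line proof simply says to set $R_{x_\xi} = x_\xi$ in \eqref{eq:approx3:exp_R}, and you have spelled out the resulting computation of the Fr\'echet derivatives and the substitution in detail. Your remark about the Hilbert space assumption on $Z$ being inessential for the first moment is a nice clarification that the paper leaves implicit.
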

\begin{proof}
    Set $R_{x_{\xi}}=x_{\xi}$ in \eqref{eq:approx3:exp_R} from \cref{th:approximations3}.
\end{proof}

Being a new and hopefully improved approximation of $\mean_{\posterior{}}[x_{\xi}]$ when compared to $x_0$, we may take the result of this series approximation as a new point of reference for a series expansion of $\mean_{\posterior{}}[x_{\xi}]$. Iterating this process motivates the iteration
\begin{align}
	x^{(n+1)} &= \Phi_{\appr}(x^{(n)}) \label{eq:iteration:approx} \\
	&= x^{(n)}
	+ \mean[\xi^{(n)}] + \Cov[\xi^{(n)}, \langle\data-Q_{x^{(n)}}, \Fre_{x^{(n)}}Q[\xi^{(n)}]\rangle_\Sigma] , && n \in \mathbb{N}_0 , \nonumber
\end{align}
with starting value $x^{(0)}=x_0$ and the modified perturbation model
\[
x_{\xi}
=
x^{(n+1)}+\xi^{(n+1)},
\]
with
\begin{align}\label{eq:xiiteration}
\xi^{(0)}=\xi,
\qquad
\xi^{(n+1)}
=
\xi^{(n)}+x^{(n)}-x^{(n+1)}
=
\xi^{(0)}+x^{(0)}-x^{(n+1)}.
\end{align}
This yields the series expansion
\begin{align}
	\mean_{\posterior{}}[x_{\xi}] \label{eq:chin}
	= x^{(n)} + \mean[\xi^{(n)}] &+ \Cov [\xi^{(n)}, \langle\data-Q_{x^{(n)}}, \Fre_{x^{(n)}}Q[\xi^{(n)}] \rangle_\Sigma]\\
	&+ \LandauO\big(\max\{\|\xi\|_{L_{\prior{}}^3(\Omega;X)},\|x^{(0)}-x^{(n)}\|_X\}^3\big) ,
	&& n \in \mathbb{N}_0,\nonumber
\end{align}
with the constant in the $\LandauO$-notation depending on $\data,\Sigma, Q$, and $x_0$, cf.\ also \cref{lem:doubletaylor}.

If the sequence \eqref{eq:iteration:approx} converges, then the approximation estimate from \cref{cor:approximationlinear} becomes as follows.
\begin{corollary}\label{cor:improvedapproximation}
	Let the assumptions of \cref{th:approximations3} hold and let $x^\fixed$ be a fixed point of \eqref{eq:iteration:approx}. 
	Then there holds
	\[
	\mean_{\posterior{}}[x_{\xi}]
	=
	x^\fixed
	+
	\LandauO\big(\max\{\|\xi\|_{L_{\prior{}}^3(\Omega;X)},\|x^{(0)}-x^\fixed\|_X\}^3\big)
	\]
	with the constant in the $\LandauO$-notation depending on $\data,\Sigma, Q$, and $x_0$.
\end{corollary}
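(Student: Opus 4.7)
The plan is to re-center the perturbation model at $x^\fixed$ using \eqref{eq:introducebias} and then exploit the fixed-point identity to kill the first-order corrections in the expansion \eqref{eq:approx3:exp_chi}. Setting $\xi^\fixed := \xi + x_0 - x^\fixed$ we have $x_\xi = x^\fixed + \xi^\fixed$, and $\xi^\fixed \in L^3_{\prior{}}(\Omega;X)$ since $x_0 - x^\fixed$ is deterministic. Applying \cref{cor:approximationlinear} at the shifted reference point therefore yields
\begin{equation*}
\mean_\posterior{}[x_\xi]
= x^\fixed
+ \mean[\xi^\fixed]
+ \Cov\big[\xi^\fixed,\langle\data-Q_{x^\fixed},\Fre_{x^\fixed}Q[\xi^\fixed]\rangle_\Sigma\big]
+ R,
\end{equation*}
where $R$ abbreviates the remainder to be estimated.

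The fixed-point condition $\Phi_\appr(x^\fixed) = x^\fixed$ unfolded via \eqref{eq:iteration:approx}, together with the observation that the sequence $\xi^{(n)} = \xi + x_0 - x^{(n)}$ in \eqref{eq:xiiteration} collapses to $\xi^\fixed$ at a fixed point, is precisely
\begin{equation*}
\mean[\xi^\fixed]
+ \Cov\big[\xi^\fixed,\langle\data-Q_{x^\fixed},\Fre_{x^\fixed}Q[\xi^\fixed]\rangle_\Sigma\big]
= 0 .
\end{equation*}
Substituting back, the first-order correction drops out and what remains is $\mean_\posterior{}[x_\xi] = x^\fixed + R$, so only the remainder has to be bounded.

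For $R$, rather than expanding $Q$ and $\nom$ directly from $x^\fixed$ (which would leave an implicit dependence of the $\LandauO$-constant on $x^\fixed$), I would revisit the proof of \cref{th:approximations3} and apply \cref{lem:doubletaylor} with base point $x_0$, increment $h = \xi$, and shift $g = x^\fixed - x_0$, so that $h - g = \xi^\fixed$. This expresses all Taylor remainders of $Q$, and via the algebraic identities \eqref{eq:nom} also those of $\nom$, in terms of the $C^3$-norm of $Q$ on a single open convex neighborhood $U$ of $x_0$ times $\max\{\|\xi\|_X,\|x_0-x^\fixed\|_X\}^3$. Passing to $L^3_{\prior{}}(\Omega;X)$ and using the triangle inequality $\|\xi^\fixed\|_{L^3_{\prior{}}(\Omega;X)}\leq\|\xi\|_{L^3_{\prior{}}(\Omega;X)}+\|x_0-x^\fixed\|_X$ produces the asserted bound, with constants depending only on $\data$, $\Sigma$, $Q$, and $x_0$.

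The main obstacle is precisely this last bookkeeping step, namely guaranteeing that the $\LandauO$-constant is genuinely uniform in $x^\fixed$. This is exactly what \cref{lem:doubletaylor} is designed for: it anchors the remainder estimate at $x_0$ and scales the bound by the maximum of the two shifts, so that together with global $C^3$-control of $Q$ on $U$ it absorbs $x^\fixed$ into the remainder argument rather than into the constant.
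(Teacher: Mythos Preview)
Your proposal is correct and follows essentially the same route as the paper: the paper obtains the corollary as an immediate consequence of the expansion \eqref{eq:chin} evaluated at the fixed point $x^{(n)}=x^\fixed$, where the correction terms vanish by the fixed-point condition, and the remainder bound with constants depending only on $\data,\Sigma,Q,x_0$ is already recorded in \eqref{eq:chin} via \cref{lem:doubletaylor}. Your explicit re-centering at $x^\fixed$, cancellation of the correction via $\Phi_\appr(x^\fixed)=x^\fixed$, and invocation of \cref{lem:doubletaylor} with $h=\xi$, $g=x^\fixed-x_0$ is exactly this argument spelled out in full.
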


While we cannot guarantee existence and uniqueness of a fixed point for \cref{cor:improvedapproximation} in general, we can relate the iteration to classical inverse problems when dealing with Hilbert spaces.

\subsection{Connection to Tikhonov regularization}
In the following we show that the iteration \eqref{eq:iteration:approx} can also be motivated from a classical inverse problem perspective, if we allow $X$ to be a Hilbert space. In this situation there holds $\Cov[\xi]\in X\otimes X$, with $\otimes$ denoting the Hilbertian tensor product, and the covariance operator
\[
\mathcal{C}\colon X\to X,
\qquad
\mathcal{C} x=\big(\Id\otimes\langle\cdot,x\rangle_X\big)\Cov[\xi]
\]
is well defined. It is easy to check that $\mathcal{C}$ is continuous and self-adjoint w.r.t.\ $\langle\cdot,\cdot\rangle_X$. We moreover assume that $\Cov[\xi]$ is positive, implying that $\mathcal{C}$ is invertible.

With notation in place, we are in the position to introduce the Tikhonov-regularized classical inverse problem
\begin{equation}\label{eq:minprob}
\min_{x\in X}F(x),
\end{equation}
with $F\colon X\to\mathbb{R}$ given as
\[
F(x)
=
\frac{1}{2}\bigg(
\big\|\data - Q(x)\big\|_\Sigma^2
+
\big\|x-x^{(0)}-\mean[\xi]\big\|_\mathcal{C}^2
\bigg)
\]
and $\|\cdot\|_{\mathcal{C}}^2=\langle\mathcal{C}^{-1}\cdot,\cdot\rangle_X$.
Now, its Fr\'echet derivative $\Fre_xF\colon X\to\mathbb{R}$ at $x\in X$ evaluated in the direction $y\in X$ is given by
\[
(\Fre_xF)[y]
=
-\langle\data-Q_x,(\Fre_xQ)[y]\rangle_\Sigma
+
\big\langle\mathcal{C}^{-1}\big(x-x^{(0)}-\mean[\xi]\big),y\big\rangle_X,
\]
implying that its gradient $\nabla_xF \in X$ at $x\in X$ is given by
\[
\nabla_xF
=
-(\Fre_xQ)^*\Sigma^{-1}\big(\data-Q_x\big)
+
\mathcal{C}^{-1}\big(x-x^{(0)}-\mean[\xi]\big).
\]
Here, $(\Fre_xQ)^*$ is the adjoint taken with respect to the $\mathbb{R}^\nobs$ inner product. Now, upon observing that \eqref{eq:xiiteration} implies $\Cov[\xi^{(n)}]=\Cov[\xi^{(0)}]=\Cov[\xi]$ and $\mean[\xi^{(n)}]=\mean[\xi]+x^{(0)}-x^{(n)}$, the descent algorithm
\begin{equation}\label{eq:descentalgo}
x^{(n+1)}
=
x^{(n)}+d^{(n)}
\end{equation}
with
\begin{align*}
d^{(n)}
&=
-\mathcal{C}\nabla_{x^{(n)}}F\\
&=
\big(\Id\otimes\langle\cdot,(\Fre_{x^{(n)}}Q)^*\Sigma^{-1}\big(\data-Q_{x^{(n)}}\big)\rangle_X\big)\Cov[\xi]
+
\mean[\xi]+x^{(0)}-x^{(n)}\\
&=
\mean[\xi^{(n)}] + \Cov[\xi^{(n)}, \langle\data-Q_{x^{(n)}}, \Fre_{x^{(n)}}Q[\xi^{(n)}]\rangle_\Sigma]
\end{align*}
coincides with \eqref{eq:iteration:approx}. Our assumptions on $\Cov[\xi]$ guarantee that $\mathcal{C}$ is such that
\[
\langle d^{(n)},\nabla_{x^{(n)}}F\rangle_X
=
-\langle\mathcal{C}\nabla_{x^{(n)}}F,\nabla_{x^{(n)}}F\rangle_X
<
0,
\]
implying that $d^{(n)}$ is a descent direction for $F$. It is well known that, even with $d^{(n)}$ being a descent direction, convergence of the iteration can only be proven if $d^{(n)}$ is scaled with a sufficiently small step length $\alpha^{(n)}$. Still, if the iteration does indeed converge, i.e., $x^{(n)}\to x^\fixed$, then also $d^{(n)}\to 0$ and our assumptions on $\Cov[\xi]$ imply that $\nabla_{x^{(n)}}F\to 0$. Thus, if our iteration \eqref{eq:iteration:approx} converges, then its limit $x^\fixed$ is a (local) minimizer of \eqref{eq:minprob} whenever $(\Fre^2_{x^{\fixed}}F)[y]>0$ for all $y\in X$, $y\neq 0$. Vice versa, if the descent algorithm \eqref{eq:descentalgo} of the minimization problem \eqref{eq:minprob} converges, then it is a fixed point of our iteration \eqref{eq:iteration:approx} and we obtain the improved approximation estimate of \cref{cor:improvedapproximation}.

\section{Computational considerations} \label{sec:implementation}
\subsection{Affine-parametric representation of random variables}
For the purpose of numerical computations we assume in the following that the random variable $\xi$ can be parametrized over a sequence of real numbers. That is, we assume that
\begin{equation}\label{eq:affineparametric}
\xi(\omega)
=
\sum_{j=1}^M x_jz_j(\omega),
\end{equation}
with $M\in\mathbb{N}\cup\{\infty\}$ and where $\{x_j\}_{j=1}^M\subset X$ and the $z_j\colon\Omega\to\mathbb{R}$ are pairwise uncorrelated random variables. This implies the representation
\begin{equation}\label{eq:affineparametricx}
x_{\xi}(\omega)
=
x_0
+
\sum_{j=1}^M x_j z_j(\omega)
\end{equation}
for the input parameters in $X$. The most prominent example for such affine-parametric representations of $x_\xi$ are (truncated) Karhunen--Loève expansions. However, one can show that such expansions exist whenever $x_\xi$ attains values in a compact subset of a Banach space, see \cite[Section 1.2]{CD2015} for a discussion and examples. For numerical purposes, the expansion \eqref{eq:affineparametric} is truncated to a finite number of terms according to some tolerance.

\subsection{Computation of asymptotic expansions}
With the affine-parametric representation \eqref{eq:affineparametricx} available, the series representations from \eqref{eq:series} now read
\begin{subequations} \label{eq:KLE}

\begin{align}
    Q_{x_{\xi}}
    &= Q_{x_0} \label{eq:KLE:Q}
    + \sum \limits_{i=1}^M z_i\Fre_{x_0}Q[x_i] + \frac{1}{2} \sum \limits_{i=1}^M \sum \limits_{j=1}^M    z_iz_j\Fre_{x_0}^2Q[x_i,x_j] + \LandauO(\|\xi\|_X^3) ,  \\
    R_{x_{\xi}}
    &= R_{x_0} \label{eq:KLE:R}
    + \sum \limits_{i=1}^M z_i\Fre_{x_0}R[x_i] + \frac{1}{2} \sum \limits_{i=1}^M \sum \limits_{j=1}^M    z_iz_j\Fre_{x_0}^2R[x_i,x_j] + \LandauO(\|\xi\|_X^3) .
\end{align}
\end{subequations}
The directional derivatives occurring in \eqref{eq:KLE} can often be calculated conveniently if $Q(x), R(x)$ are explicitly available, as illustrated in the references in the literature review and in the numerical examples in \cref{sec:examples}. The following lemma shows that computing the entities in \eqref{eq:KLE} is sufficient to obtain approximations to the posterior moments.
\begin{corollary} \label{cor:KLE:uncentered}
	Let the assumptions of \cref{th:approximations3} hold and assume that the prior distribution of $x_\xi$ is given in the form as in \eqref{eq:affineparametricx} with $\{z_j\}_{j\in\mathbb{N}}$ being pairwise uncorrelated random variables satisfying $z_j\in L_{\prior{}}^2(\Omega)$. Then the expansions in \cref{th:approximations3} read
	\begin{subequations}
		\begin{align}
			\mean_\posterior{}[ R_{x_{\xi}} ]
			&= R_{x_0}
			+ \Fre_{x_0}R\bigg[\sum \limits_{i=1}^M \mean[z_i]x_i\bigg] \\
			&\qquad+ \frac{1}{2} \bigg( \sum \limits_{i=1}^M  \Cov[z_i] \Fre_{x_0}^2R[x_i]
			+ \Fre_{x_0}^2R\bigg[\sum \limits_{i=1}^M  \mean[z_i]x_i\bigg] \bigg) \nonumber \\
			&\qquad+\sum_{i=1}^M \Cov[z_i]\Fre_{x_0}R[x_i]\langle\data-Q_{x_0}, \Fre_{x_0}Q[x_i]\rangle_\Sigma 
			+\LandauO(\|\xi\|_{L_{\prior{}}^3(\Omega;X)}^3) \nonumber , \displaybreak[0] \\
			\Cor_\posterior{}[ R_{x_{\xi}} ]
			&= R_{x_0} \otimes R_{x_0} 
			+ \Fre_{x_0}R\bigg[\sum_{i=1}^M\mean[z_i]x_i\bigg]\otimes R_{x_0} + R_{x_0} \otimes \Fre_{x_0}R\bigg[\sum_{i=1}^M\mean[z_i]x_i\bigg]  \\
			&\qquad + \frac{1}{2}\bigg( 
			\sum \limits_{i=1}^M  \Cov[z_i] \Fre_{x_0}^2R[x_i]\otimes R_{x_0}
			+R_{x_0}\otimes \sum \limits_{i=1}^M  \Cov[z_i] \Fre_{x_0}^2R[x_i] \nonumber \\ 
			&\qquad\quad + \Fre_{x_0}^2R\bigg[\sum \limits_{i=1}^M  \mean[z_i]x_i\bigg]\otimes R_{x_0}
			+R_{x_0}\otimes\Fre_{x_0}^2R\bigg[\sum \limits_{i=1}^M  \mean[z_i]x_i\bigg]\bigg)\nonumber\\
			&\qquad+ \sum \limits_{i=1}^M \Cov[z_i] \Fre_{x_0}R[x_i]\otimes \Fre_{x_0}R[x_i] \nonumber\\			
			&\qquad+\Fre_{x_0}R\bigg[\sum \limits_{i=1}^M  \mean[z_i]x_i\bigg]\otimes \Fre_{x_0}R\bigg[\sum \limits_{i=1}^M  \mean[z_i]x_i\bigg] \nonumber \\
			& \qquad + \sum_{i=1}^M\Cov[z_i]\bigg(\Fre_{x_0}R[x_i]\otimes R_{x_0}+R_{x_0}\otimes\Fre_{x_0}R[x_i]\bigg)\nonumber\\
			&\qquad\qquad\qquad\qquad\qquad\qquad\qquad\cdot\langle\data-Q_{x_0}, \Fre_{x_0}Q[x_i]\rangle_\Sigma \nonumber \\ 
			&\qquad + \LandauO(\|\xi\|_{L_{\prior{}}^3(\Omega;X)}^3) \nonumber, \displaybreak[0] \\
			\Cov_\posterior{} [R_{x_{\xi}}] 
			&= \sum_{i=1}^M\Cov[z_i] \Fre_{x_0}R[x_i]\otimes \Fre_{x_0}R[x_i]
			+ \LandauO(\|\xi\|_{L_{\prior{}}^3(\Omega;X)}^3) .
		\end{align}
	\end{subequations}
	As for \cref{th:approximations4}, the first order correction terms vanish, if we assume that the $z_j$ are centered, i.e., $\mean[z_j]=0$. 
	Further, if $\xi \in L^4_{\prior{}}(\Omega;X)$ and $\xi$ is centered and additionally skewless, i.e., $\mean[\xi\otimes\xi\otimes\xi] = 0$, $Q \in C^4(X;\mathbb{R}^\nobs)$, and $R\in C^4(X;Z) \cap L^2_{\prior{\xi}}(X;Z)$, then the error term improves to $\LandauO(\|\xi\|_{L_{\prior{}}^4(\Omega;X)}^4)$.
\end{corollary}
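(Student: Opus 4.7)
The plan is to substitute the affine-parametric representation \eqref{eq:affineparametricx} directly into the expansions of \cref{th:approximations3} and \cref{th:approximations4}, and then exploit multilinearity of the Fréchet derivatives together with the pairwise uncorrelatedness of $\{z_j\}$ to simplify the resulting multi-index sums. No new estimates on the Taylor remainder are needed: the $\LandauO$-terms of \cref{th:approximations3,th:approximations4} carry over verbatim because the hypotheses on $\xi$ are the same.

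First, I would expand each directional derivative by $k$-linearity,
\begin{equation*}
\Fre_{x_0}R[\xi] = \sum_{i=1}^M z_i\,\Fre_{x_0}R[x_i],
\qquad
\Fre_{x_0}^2R[\xi] = \sum_{i=1}^M\sum_{j=1}^M z_i z_j\,\Fre_{x_0}^2R[x_i,x_j],
\end{equation*}
and analogously for $Q$. When $M=\infty$, the interchange of the infinite sum with $\Fre_{x_0}$ and with $\mean[\cdot]$ is justified by the assumption $z_j\in L^2_{\prior{}}(\Omega)$ and continuity of the multilinear forms, so dominated convergence applies.

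Second, I would compute the prior moments term-by-term. The first moment gives $\mean[\Fre_{x_0}R[\xi]]=\Fre_{x_0}R[\sum_i\mean[z_i]x_i]$ by linearity. For the second moment I would use $\mean[z_iz_j]=\Cov[z_i,z_j]+\mean[z_i]\mean[z_j]$ together with pairwise uncorrelatedness, $\Cov[z_i,z_j]=\delta_{ij}\Cov[z_i]$, to split
\begin{equation*}
\mean[\Fre_{x_0}^2R[\xi]]
=\sum_{i=1}^M\Cov[z_i]\,\Fre_{x_0}^2R[x_i]
+\Fre_{x_0}^2R\!\left[\sum_{i=1}^M\mean[z_i]x_i\right].
\end{equation*}
The covariance term in \eqref{eq:approx3:exp_R} is handled identically: bilinearity of $\Cov[\cdot,\cdot]$ followed by pairwise uncorrelatedness collapses the double sum onto its diagonal, yielding the stated $\sum_i \Cov[z_i]\Fre_{x_0}R[x_i]\langle\data-Q_{x_0},\Fre_{x_0}Q[x_i]\rangle_\Sigma$.

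Third, I would plug these simplifications into \eqref{eq:approx3:exp_R}, \eqref{eq:approx3:exp_RxR}, \eqref{eq:approx3:cov_R}. For the correlation formula, the only additional bookkeeping is to expand the tensor products $\Fre_{x_0}R[\xi]\otimes R_{x_0}$, $R_{x_0}\otimes\Fre_{x_0}R[\xi]$, and $\Fre_{x_0}R[\xi]\otimes\Fre_{x_0}R[\xi]$ and to apply the same diagonal collapse; the last of these produces the term $\sum_i\Cov[z_i]\Fre_{x_0}R[x_i]\otimes\Fre_{x_0}R[x_i]$ after using $\mean[z_iz_j]=\delta_{ij}\Cov[z_i]+\mean[z_i]\mean[z_j]$ and grouping the rank-one remainder as $\Fre_{x_0}R[\sum_i\mean[z_i]x_i]^{\otimes 2}$. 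The covariance expansion follows by subtracting the product of the mean expansions, so the off-diagonal and first-moment contributions cancel to leading order.

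Finally, the two sharper variants are immediate: if $\mean[z_i]=0$ for all $i$, then $\mean[\xi]=0$ and every term carrying a factor $\mean[z_i]$ (or a sum $\sum_i\mean[z_i]x_i$) disappears, recovering \cref{th:approximations4}; under the additional assumption $\mean[\xi\otimes\xi\otimes\xi]=0$ with $\xi\in L^4_{\prior{}}(\Omega;X)$ and $C^4$-regularity of $Q,R$, the improved remainder $\LandauO(\|\xi\|_{L^4_{\prior{}}(\Omega;X)}^4)$ of \cref{th:approximations4} transfers directly. The main obstacle in the write-up is purely combinatorial: keeping the tensor-product terms in the correlation expansion in the order prescribed by \eqref{eq:approx3:exp_RxR} and ensuring that, when $M=\infty$, the term-wise manipulations are backed by the $L^2_{\prior{}}$-integrability of the coordinate variables so that no unjustified interchange is performed.
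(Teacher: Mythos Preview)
Your proposal is correct and follows exactly the approach of the paper, which states in one line: ``Insert \eqref{eq:affineparametric} into \cref{th:approximations3} and \cref{th:approximations4}.'' You have simply spelled out the multilinearity and uncorrelatedness bookkeeping that the paper leaves implicit.
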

\begin{proof}
Insert \eqref{eq:affineparametric} into \cref{th:approximations3} and \cref{th:approximations4}.
\end{proof}
\begin{remark}
    As for the deterministic case, this approach can be extended to arbitrary high order of the correction terms. However, when implemented in practice, the computational cost grows, if done naively, exponentially with the order of the derivatives. The efficient numerical solution of the arising \emph{moment equations} has been subject to intensive research, see, e.g., \cite{CS2013,HPS2013}, with an emphasis on being put on corrections up to second order. In the latter context, the moment equations are referred to as \emph{correlation equations} \cite{BG2004,DHP2017,DHS2017,HPS2012,HSS2008,vS2006}.
\end{remark}

\subsection{Iterative approximation of the posterior mean}
In the following we discuss how the iterative approximation of the posterior mean from \cref{sec:itimpr} can be computationally implemented when the random variable and input data are given in the affine-parametric representations \eqref{eq:affineparametric} and \eqref{eq:affineparametricx}. To this end, we assume that $X$ is a separable Hilbert space, which implies that we can assume without loss of generality that $\{x_j\}_{j\in\mathbb{N}}$ in \eqref{eq:affineparametric} is an orthonormal basis. Thus, 
\[
x_{\xi}(\omega)
= x_0 + \sum_{j=1}^M x_jz_j(\omega)
= \sum_{j=1}^M x_j\big(z_j(\omega)+\underbrace{\langle x_j,x_0\rangle_X}_{=:y_j}\big)
= \sum_{j=1}^M x_j\big(z_j(\omega)+y_j\big).
\]
Now, since for any $\alpha=[\alpha_1,\alpha_2,\ldots]$ and $\beta=[\beta_1,\beta_2,\ldots]$ there holds
\[
\bigg\langle\sum_{j=1}^M x_j\alpha_j,\sum_{j=1}^M x_j\beta_j\bigg\rangle_X
=
\langle\alpha,\beta\rangle_{\ell^2},
\]
the iteration from \cref{sec:itimpr} can be transferred to $\mathbb{R}^{M}$ equipped with the Euclidean $\ell^2$-inner product. To this end, set $y^{(0)}=[y_1,y_2,\ldots]$ and $z^{(0)}=[z_1,z_2,\ldots]$ and iterate
\[
y^{(n+1)}
=
y^{(n)}
+
\mean[z^{(n)}]
+
\Big[
\Cov[z_j^{(n)}]\langle \data-Q_{x^{(n)}},\Fre_{x^{(n)}}Q[x_j]\rangle_\Sigma
\Big]_{j=1}^M,
\]
where
\[
z^{(n+1)}
=
z^{(n)}+y^{(n)}-y^{(n+1)}
=
z^{(0)}+y^{(0)}-y^{(n+1)}
\]
and
\[
x^{(n)}
=
x_0 + \sum_{j=1}^M x_jz_j^{(n)}.
\]

\section{Numerical Examples} \label{sec:examples}

We consider two examples, a PDE, where the parameter space $X$ is a Hilbert space, and an ODE, where $X$ is a Banach space.

\subsection{Computational Setup}
All calculations were performed using MATLAB on a single node of the Marvin cluster of the University of Bonn. 
Each node has two Intel Xeon Platinum 8468 with forty-eight $2.1$ GHz cores, hyper-threading enabled and $1024$~GB~RAM. 
Calculations of the summed terms of \cref{cor:KLE:uncentered} as well as sampling were both done in parallel.

\subsection{Darcy flow with log-normal permeability}
As an example problem we consider the Darcy flow problem with permeability given by a log-normal random field, fixed load, and homogeneous boundary conditions, i.e.,
\begin{subequations}\label{eq:darcyflow}
\begin{align}
-\div(\exp(b(\omega))\nabla u(\omega))&=1&&\text{in}~\domain:=[0,1]^2,\\
u(\omega)&=0&&\text{on}~\partial \domain,
\end{align}
\end{subequations}
where $b(\omega)$ is a Gaussian random field taking values in $L^2(\domain)$ and $\omega$ is a random event in a probability space. This is an important situation in the geosciences \cite{BC2010,GN1999,TN1998}, where it is practically impossible to obtain complete knowledge on the permeability of the medium via direct measurements. Instead, practitioners are required to rely on measurements of the pressure $u$ at selected points in $\domain$. The situation corresponds to our framework when choosing $X=L^2(\domain)$, $Y=H_0^1(\domain)$, and $G$ as the forward response map $b(\cdot,\omega)\mapsto u(\cdot,\omega)$ given through \eqref{eq:darcyflow}. As observation operator $O\colon Y\to\mathbb{R}^\nobs$ we choose $\nobs=5$ point evaluations of $u$ at the locations
\[
\Big\{
\big(\tfrac{1}{2}, \tfrac{1}{2}\big),
\big(\tfrac{1}{4}, \tfrac{1}{4}\big),
\big(\tfrac{3}{4}, \tfrac{1}{4}\big),
\big(\tfrac{3}{4}, \tfrac{3}{4}\big),
\big(\tfrac{1}{4}, \tfrac{3}{4}\big)
\Big\}\subset \domain,
\]
see \cref{fig:sample} for an illustration.
We note that these point evaluations are well defined if $u$ is a pathwise sufficiently smooth function. To satisfy this requirement, we choose $b$ as a random field with mean $\mean[b]\equiv 1$ and covariance operator $\mathcal{C}\colon L^2(\domain)\to L^2(\domain)$ given through
\begin{align}\label{eq:covarianceoperator}
(\mathcal{C}\phi)(\vec{x})
=
\int_\domain\exp\bigg(-\frac{20\|\vec{x}-\vec{y}\|_2^2}{3}\bigg)\phi(\vec{y})\dd\vec{y},
\end{align}
and representation
\begin{equation}\label{eq:KL}
b(\vec{x},\omega)
=
b_0(\vec{x})
+
\underbrace{\alpha\sum_{i=1}^\infty b_i(\vec{x}) z_i(\omega)}_{=:\xi(\omega)},
\end{equation}
where $\alpha>0$, $b_0=\mean[b]$, $\{(b_i,\lambda_i)\}_{i=1}^\infty$ are the $L^2(\domain)$-orthonormal eigenpairs of $\mathcal{C}$ organized such that $\{\lambda_i\}_{i=1}^\infty$ is decreasing. We choose $z_i\sim\mathcal{U}[-\sqrt{\lambda_i},\sqrt{\lambda_i}]$ for experiments with centered random fields and $z_i\sim\mathcal{U}[-\sqrt{\lambda_i},\sqrt{\lambda_i}]+0.1$ for experiments with uncentered random fields. It is well known that this setting implies pathwise smoothness of $u$, such that $O$ is well defined. As prediction function we consider on the one hand the identity operator $R_1\colon X\to X=:Z_1$ and on the other hand the forward response map $R_2=G\colon X\to Y=:Z_2$. Finally, the $k$-th derivative of the forward response map $G$ is given as the solution of
\begin{subequations}\label{eq:darcyflowder}
\begin{align}
-\div\Big(\exp(b_0)\nabla \Fre_{b_0}^kG[\xi]\Big)
&=
\sum_{\ell=1}^k\binom{k}{\ell}\div\bigg(\exp(b_0)\xi(\omega)^\ell\nabla \Fre_{b_0}^{k-\ell}G[\xi]\bigg)&&\text{in}~\domain,\\
\Fre_{b_0}^kG[\xi]&=0&&\text{on}~\boundary,
\end{align}
\end{subequations}
see, e.g., \cite{BNT2007,BN2014,GN1999,TN1998}. Using the linearity of the observation operator and the definition of the prediction functions we obtain
\begin{align*}
\Fre_{b_0}^kQ[\xi]=O(\Fre_{b_0}^kG[\xi]), &&
\Fre_{b_0}^kR_2[\xi]=\Fre_{b_0}^kG[\xi], &&
\Fre_{b_0}^{1}R_1[\xi]=\xi, &&
\Fre_{b_0}^{1+k}R_1[\xi]=0,
\end{align*}
for $k \in \mathbb{N}$. An illustration of the log-normal random field, corresponding forward response, i.e., solution of the diffusion problem \eqref{eq:darcyflow}, its derivative and the evaluation points of the measurement operator can be found in \cref{fig:sample}.
\begin{figure}
	\centering
	\input{fig/sample/fig_u0.tex}
	\input{fig/sample/fig_randomfield.tex}\\
	\input{fig/sample/fig_u.tex}
	\input{fig/sample/fig_du.tex}
	\caption{Darcy model: 
	Top left: reference solution. 
	Top right: realization of random field. 
	Bottom left: perturbed solution according to random field top right. 
	Bottom right: Derivative in direction of random field top right. 
	Evaluation points highlighted.}	
	\label{fig:sample}
\end{figure}
We assume that the measurement noise has covariance
\[
[\Sigma]_{ij}
=
\frac{1}{1000}
\begin{cases}
5&i=j,\\
1&i\neq j,
\end{cases}
\quad
i,j=1,\ldots,5.
\]
We note that this noise level is quite small, and would be quite a challenge for most sampling based methods.

With the mathematical model set, we obtain measurement data $\eta^\delta$ through \eqref{eq:noisemodel}, i.e., we pick a random realization of $\xi(\omega)$, take it to be our ground truth, evaluate $Q(\xi(\omega))$, and add a realization of the measurement noise $\varepsilon\sim\mathcal{N}(0,\Sigma)$.

\subsubsection{Discretization}
We discretize the covariance operator \eqref{eq:covarianceoperator} and the Darcy flow problem \eqref{eq:darcyflow} and its derivatives \eqref{eq:darcyflowder} using continuous, piecewise linear finite elements on a triangular grid, leading to $N=481$ degrees of freedom for the Darcy flow problem and its derivatives and $545$ degrees of freedom for the perturbations. It is important to note that the limiting factor for the number of degrees of freedom is the computational cost of the Monte Carlo method (details below), which serves as a reference solution. The perturbation approach can easily deal with more degrees of freedom. As usual, to obtain an expansion of the form \eqref{eq:affineparametric}, we compute the eigendecomposition of the arising covariance matrix, and select the eigenfunctions corresponding to the largest eigenvalues with tolerance of $10^{-5}$, which yields an expansion of the form \eqref{eq:affineparametric} with $M=45$ terms. As a reference solution for our sensitivity analysis we use a QMC approximation of the variables of interest using $10^8$ Halton points.

\subsubsection{Local sensitivity analysis for Bayesian inverse problems}\label{sec:perturbationexperiments}
To confirm the theoretical convergence rates we consider in the following a scaled version of $\xi(\omega)$, i.e., we replace $\xi(\omega)$ in \eqref{eq:KL} by $\alpha\xi(\omega)$ with $\alpha\in\{2^{-n}\colon n=0,\ldots,15\}$ and compare the approximations obtained through local sensitivity analysis with the QMC reference solution for each $\alpha$. 
We calculate the approximations
\begin{align*}
\mean_\posterior{}[ R_{x_{\xi}} ]
&\approx \meanappr_\posterior{}[ R_{x_{\xi}} ]\\
&:= R_{x_0} 
+ \mean[\Fre_{x_0}R] \\ 
&\quad+ \frac{1}{2} \bigg( \mean[\Fre_{x_0}^2R] 
+ 2 \Cov[\Fre_{x_0}R, \langle\data-Q_{x_0}, \Fre_{x_0}Q\rangle_\Sigma ] \nonumber \bigg) \nonumber , \displaybreak[0] \\
\Cor_\posterior{}[ R_{x_{\xi}} ]
&\approx\operatorname{Cor}_\posterior{}[ R_{x_{\xi}} ]\\
&:= R_{x_0} \otimes R_{x_0} 
+ \bigg(\mean[\Fre_{x_0}R] \otimes R_{x_0} + R_{x_0} \otimes \mean[\Fre_{x_0}R]\bigg)\\
& \quad + \frac{1}{2} \bigg( \mean[\Fre_{x_0}^2R] \otimes R_{x_0} 
+ R_{x_0} \otimes \mean[\Fre_{x_0}^2R] 
+ 2 \Cor[\Fre_{x_0}R] \nonumber \\
& \qquad + 2 \Cov[\Fre_{x_0}R \otimes R_{x_0} + R_{x_0} \otimes \Fre_{x_0}R,\langle\data-Q_{x_0}, \Fre_{x_0}Q\rangle_\Sigma] \bigg) \nonumber, \displaybreak[0] \\
\Cov_\posterior{} [R_{x_{\xi}}] 
&\approx
\operatorname{Cov}_\posterior{} [R_{x_{\xi}}] 
:= \Cov[\Fre_{x_0}R],
\end{align*}
for the case of uncentered prior distributions and whose approximation properties are given by of \cref{th:approximations3}. 
For centered prior distributions we compute the simplified approximations for mean and correlation according to
\begin{align*}
\mean_\posterior{}[ R_{x_{\xi}} ]
&\approx
\meanappr_\posterior{}[ R_{x_{\xi}} ]
=
 R_{x_0}
+ \frac{1}{2} \bigg( \mean[\Fre_{x_0}^2R] 
+ 2 \Cor[\Fre_{x_0}R , \langle\data-Q_{x_0}, \Fre_{x_0}Q\rangle_\Sigma] \bigg) , \displaybreak[0] \\
\Cor_\posterior{}[ R_{x_{\xi}} ]
&\approx
\operatorname{Cor}_\posterior{}[ R_{x_{\xi}} ]\\
&= R_{x_0} \otimes R_{x_0} \\
&\quad + \frac{1}{2} \bigg( \mean[\Fre_{x_0}^2R] \otimes R_{x_0} 
+ R_{x_0} \otimes \mean[\Fre_{x_0}^2R] + 2 \Cor[\Fre_{x_0}R ] \nonumber \\
&\qquad\quad + 2 \Cor[(\Fre_{x_0}R \otimes R_{x_0} + R_{x_0} \otimes \Fre_{x_0}R) , \langle\data-Q_{x_0}, \Fre_{x_0}Q\rangle_\Sigma] \bigg) \nonumber,
\end{align*}
with approximation properties given by \cref{th:approximations4}.
As error measure we use the $\|\cdot\|_{L^2(\domain)}$-norm, for the mean and the $\|\cdot\|_{L^2(\domain) \otimes L^2(\domain)}$-norm, for the covariance and the correlation. 
We note that computing the approximation using local sensitivity analysis for all $\alpha$ requires less than a second in total, most of which was spent building the system matrices, whereas the QMC reference solution requires 33 minutes for each $\alpha$. 
Using \cref{cor:KLE:uncentered}, the memory requirements for the calculation of the approximations are dictated by the required memory for computation of the first and second order derivatives of $R$. 
Thus, the memory required scales linearly in $M$, the number of KLE-terms, and the degrees of freedom of the discretization $N$.
The calculation of these derivatives amounts to the solution of the same linear PDE as the computation of the reference point and thus requires the same amount of memory. Thus, the overall memory requirement is $\mathcal{O}(MN)$.

The plots of the error are shown in \cref{fig:conv_exp_cov3} for the uncentered case and in \cref{fig:conv_exp_cov4} for the centered and skewless case compared to the predicted rates.
The predicted rates are confirmed, albeit for some small perturbations, the error of the reference solution is still too large to see the actual error of the perturbation approximation.
\begin{figure}
	\centering
	\definecolor{mycolor1}{rgb}{0.00000,0.44700,0.74100}%
\definecolor{mycolor2}{rgb}{0.85000,0.32500,0.09800}%
\begin{tikzpicture}

\begin{axis}[%
width=.3\textwidth,
height=.4\textwidth,
at={(0,0)},
scale only axis,
xmode=log,
xmin=1e-05,
xmax=1,
xminorticks=true,
xlabel style={font=\color{white!15!black}},
xlabel={$\alpha\sim\|\xi\|_{L_{\prior{}}^3(\Omega;X)}$},
ymode=log,
ymin=2.22044604925031e-16,
ymax=100,
yminorticks=true,
ylabel style={font=\color{white!15!black}},
ylabel={$\|\cdot\|_{L^2(\domain)}$-error},
axis background/.style={fill=white},
title style={font=\bfseries},
title={Error $\mean_\posterior{}$},
axis x line*=bottom,
axis y line*=left,
legend style={at={(-0,-0.25)}, anchor=north west, legend cell align=left, align=left, draw=white!15!black},
legend columns=1,
]

\addplot [color=mycolor2, dashed, mark=+, mark options={solid, mycolor2}]
table[row sep=crcr]{%
	3.05175781249999e-05	2.56638620705176e-09\\
	6.10351562499999e-05	5.13264231181433e-09\\
	0.0001220703125	1.02642966848216e-08\\
	0.000244140625	2.05208897378055e-08\\
	0.000488281249999999	4.09809134546828e-08\\
	0.000976562499999999	8.14815496733774e-08\\
	0.001953125	1.59363326962674e-07\\
	0.00390625	3.0145487973897e-07\\
	0.00781249999999999	9.2622948767306e-07\\
	0.015625	8.60724207858656e-06\\
	0.03125	9.14253114953558e-05\\
	0.0625	0.00101295283382424\\
	0.125	0.0108123975298309\\
	0.25	0.0932363036485665\\
	0.5	0.53479941407133\\
	1	2.31353548507061\\
};
\addlegendentry{$\meanappr_\posterior{}[ R_{1,x_{\xi}} ]$}
\addplot [color=mycolor1, dashed, mark=+, mark options={solid, mycolor1}]
  table[row sep=crcr]{%
3.05175781249999e-05	3.81235906307102e-11\\
6.10351562499999e-05	7.62397800483087e-11\\
0.0001220703125	1.52417928552616e-10\\
0.000244140625	3.04331536103302e-10\\
0.000488281249999999	6.04559312938978e-10\\
0.000976562499999999	1.17576868394131e-09\\
0.001953125	2.08347045140297e-09\\
0.00390625	2.55077074537105e-09\\
0.00781249999999999	2.42052033535986e-08\\
0.015625	2.8898612332007e-07\\
0.03125	3.15453800687475e-06\\
0.0625	3.565387055653e-05\\
0.125	0.000387583524573277\\
0.25	0.00339152449724368\\
0.5	0.019339268238941\\
1	0.0827015462921294\\
};
\addlegendentry{$\meanappr_\posterior{}[ R_{2,x_{\xi}} ]$}

\addplot [color=black, dashed, mark options={solid, black}]
  table[row sep=crcr]{%
3.05175781249999e-05	2.8421709430404e-14\\
6.10351562499999e-05	2.27373675443232e-13\\
0.0001220703125	1.81898940354585e-12\\
0.000244140625	1.45519152283668e-11\\
0.000488281249999999	1.16415321826935e-10\\
0.000976562499999999	9.31322574615479e-10\\
0.001953125	7.45058059692383e-09\\
0.00390625	5.96046447753907e-08\\
0.00781249999999999	4.76837158203124e-07\\
0.015625	3.81469726562499e-06\\
0.03125	3.05175781249999e-05\\
0.0625	0.000244140625\\
0.125	0.001953125\\
0.25	0.015625\\
0.5	0.125\\
1	1\\
};
\addlegendentry{$\LandauO(\alpha^3)$}

\end{axis}
\end{tikzpicture}%
	\definecolor{mycolor1}{rgb}{0.00000,0.44700,0.74100}%
\definecolor{mycolor2}{rgb}{0.85000,0.32500,0.09800}%
\begin{tikzpicture}

\begin{axis}[%
width=.3\textwidth,
height=.4\textwidth,
at={(0,0)},
scale only axis,
xmode=log,
xmin=1e-05,
xmax=1,
xminorticks=true,
xlabel style={font=\color{white!15!black}},
xlabel={$\alpha\sim\|\xi\|_{L_{\prior{}}^3(\Omega;X)}$},
ymode=log,
ymin=2.22044604925031e-16,
ymax=1,
yminorticks=true,
ylabel style={font=\color{white!15!black}},
ylabel={$\|\cdot\|_{L^2(\domain)\otimes L^2(\domain)}$-error},
axis background/.style={fill=white},
title style={font=\bfseries},
title={Error $\Cov_\posterior{}$, $\Cor_\posterior{}$},
axis x line*=bottom,
axis y line*=left,
legend style={at={(-0.2,-0.25)}, anchor=north west, legend cell align=left, align=left, draw=white!15!black},
legend columns=2,
]

\addplot [color=mycolor2, dashed, mark=+, mark options={solid, mycolor2}]
table[row sep=crcr]{%
	3.05175781249999e-05	4.43176158699274e-10\\
	6.10351562499999e-05	8.37556081983343e-10\\
	0.0001220703125	1.48541728357793e-09\\
	0.000244140625	2.28356259555212e-09\\
	0.000488281249999999	3.35609899779404e-09\\
	0.000976562499999999	1.69678321082085e-08\\
	0.001953125	9.25950595007098e-08\\
	0.00390625	4.40626469985022e-07\\
	0.00781249999999999	2.03737657196801e-06\\
	0.015625	9.93095560383724e-06\\
	0.03125	5.57985623423683e-05\\
	0.0625	0.000386330612997737\\
	0.125	0.00302577471516177\\
	0.25	0.0200487455305973\\
	0.5	0.0924493765235618\\
	1	0.374328695254915\\
};
\addlegendentry{$\operatorname{Cor}_\posterior{}[ R_{1,x_{\xi}} ]$}

\addplot [color=mycolor2, dashed, mark=x, mark options={solid, mycolor2}]
table[row sep=crcr]{%
	3.05175781249999e-05	8.5454969486371e-15\\
	6.10351562499999e-05	3.41759009212429e-14\\
	0.0001220703125	1.36667683328611e-13\\
	0.000244140625	5.46398448336349e-13\\
	0.000488281249999999	2.19147897742987e-12\\
	0.000976562499999999	9.4139017714151e-12\\
	0.001953125	6.87128154125066e-11\\
	0.00390625	9.67101982380812e-10\\
	0.00781249999999999	1.52882552047056e-08\\
	0.015625	2.41152513955182e-07\\
	0.03125	3.69969807623037e-06\\
	0.0625	5.19860245874488e-05\\
	0.125	0.000574203247906418\\
	0.25	0.00439106305332262\\
	0.5	0.0271606137305571\\
	1	0.13045109578408\\
};
\addlegendentry{$\operatorname{Cov}_\posterior{}[ R_{1,x_{\xi}} ]$}
\addplot [color=mycolor1, dashed, mark=+, mark options={solid, mycolor1}]
  table[row sep=crcr]{%
3.05175781249999e-05	2.73510195628616e-12\\
6.10351562499999e-05	5.46809628892715e-12\\
0.0001220703125	1.09254743152425e-11\\
0.000244140625	2.17895660834983e-11\\
0.000488281249999999	4.31825293660176e-11\\
0.000976562499999999	8.35286580186913e-11\\
0.001953125	1.45215008731532e-10\\
0.00390625	1.31621151672991e-10\\
0.00781249999999999	1.62354471296157e-09\\
0.015625	2.05525320296912e-08\\
0.03125	2.29406952273669e-07\\
0.0625	2.64088429736852e-06\\
0.125	2.92146239808345e-05\\
0.25	0.000261304529580716\\
0.5	0.00151243863859356\\
1	0.00647766683446887\\
};
\addlegendentry{$\operatorname{Cor}_\posterior{}[ R_{2,x_{\xi}} ]$}

\addplot [color=mycolor1, dashed, mark=x, mark options={solid, mycolor1}]
  table[row sep=crcr]{%
0.0001220703125	7.48913448563991e-17\\
0.000244140625	4.11367257081568e-16\\
0.000488281249999999	2.5284263039097e-15\\
0.000976562499999999	1.58677063201395e-14\\
0.001953125	8.79672824049045e-14\\
0.00390625	4.34430614463487e-13\\
0.00781249999999999	1.03960571638955e-11\\
0.015625	2.24874940028269e-10\\
0.03125	3.94863046706345e-09\\
0.0625	5.90107450674705e-08\\
0.125	6.69815289179994e-07\\
0.25	5.21214111422185e-06\\
0.5	3.20195022779435e-05\\
1	0.000144287277690394\\
};
\addlegendentry{$\operatorname{Cov}_\posterior{}[ R_{2,x_{\xi}} ]$}

\addplot [color=black, dashed, mark options={solid, black}]
  table[row sep=crcr]{%
3.05175781249999e-05	2.8421709430404e-14\\
6.10351562499999e-05	2.27373675443232e-13\\
0.0001220703125	1.81898940354585e-12\\
0.000244140625	1.45519152283668e-11\\
0.000488281249999999	1.16415321826935e-10\\
0.000976562499999999	9.31322574615479e-10\\
0.001953125	7.45058059692383e-09\\
0.00390625	5.96046447753907e-08\\
0.00781249999999999	4.76837158203124e-07\\
0.015625	3.81469726562499e-06\\
0.03125	3.05175781249999e-05\\
0.0625	0.000244140625\\
0.125	0.001953125\\
0.25	0.015625\\
0.5	0.125\\
1	1\\
};
\addlegendentry{$\LandauO(\alpha^3)$}

\end{axis}
\end{tikzpicture}%
	\caption{
		Convergence for Darcy model and uncentered prior distributions. 
	}
	\label{fig:conv_exp_cov3}
\end{figure}

\begin{figure}
	\centering
	\definecolor{mycolor1}{rgb}{0.00000,0.44700,0.74100}%
\definecolor{mycolor2}{rgb}{0.85000,0.32500,0.09800}%
\begin{tikzpicture}

\begin{axis}[%
width=.3\textwidth,
height=.4\textwidth,
at={(0,0)},
scale only axis,
xmode=log,
xmin=1e-05,
xmax=1,
xminorticks=true,
xlabel style={font=\color{white!15!black}},
xlabel={$\alpha\sim\|\xi\|_{L_{\prior{}}^4(\Omega;X)}$},
ymode=log,
ymin=2.22044604925031e-16,
ymax=1,
yminorticks=true,
ylabel style={font=\color{white!15!black}},
ylabel={$\|\cdot\|_{L^2(\domain)}$-error},
axis background/.style={fill=white},
title style={font=\bfseries},
title={Error $\mean_\posterior{}$},
axis x line*=bottom,
axis y line*=left,
legend style={at={(-0,-0.25)}, anchor=north west, legend cell align=left, align=left, draw=white!15!black},
legend columns=1,
]

\addplot [color=mycolor2, dashed, mark=+, mark options={solid, mycolor2}]
  table[row sep=crcr]{%
3.05175781249999e-05	4.01857578540162e-12\\
6.10351562499999e-05	8.03711558759622e-12\\
0.0001220703125	1.60736892223361e-11\\
0.000244140625	3.21388613830744e-11\\
0.000488281249999999	6.41442177790902e-11\\
0.000976562499999999	1.26441218090763e-10\\
0.001953125	2.62147314942606e-10\\
0.00390625	2.34522872776988e-09\\
0.00781249999999999	3.85493715329702e-08\\
0.015625	6.18956907377809e-07\\
0.03125	9.82191443726266e-06\\
0.0625	0.000151622676488386\\
0.125	0.00210577561038483\\
0.25	0.0202290919197898\\
0.5	0.109936986295113\\
1	0.497308279829146\\
};
\addlegendentry{$\meanappr_\posterior{}[ R_{1,x_{\xi}} ]$}

\addplot [color=mycolor1, dashed, mark=+, mark options={solid, mycolor1}]
  table[row sep=crcr]{%
3.05175781249999e-05	4.54086619763742e-14\\
6.10351562499999e-05	9.0818007363888e-14\\
0.0001220703125	1.81714662794535e-13\\
0.000244140625	3.62311520292108e-13\\
0.000488281249999999	7.09793675854375e-13\\
0.000976562499999999	1.20562187229507e-12\\
0.001953125	4.74894634848997e-12\\
0.00390625	9.75164069175805e-11\\
0.00781249999999999	1.61576846288884e-09\\
0.015625	2.59084867824908e-08\\
0.03125	4.110163299955e-07\\
0.0625	6.34271722422275e-06\\
0.125	8.79512417040639e-05\\
0.25	0.000837256301936738\\
0.5	0.00429785957914631\\
1	0.0178543076357566\\
};
\addlegendentry{$\meanappr_\posterior{}[ R_{2,x_{\xi}} ]$}

\addplot [color=black, dashed, mark options={solid, black}]
  table[row sep=crcr]{%
6.10351562499999e-05	1.38777878078145e-17\\
0.0001220703125	2.22044604925032e-16\\
0.000244140625	3.55271367880051e-15\\
0.000488281249999999	5.68434188608077e-14\\
0.000976562499999999	9.09494701772923e-13\\
0.001953125	1.45519152283668e-11\\
0.00390625	2.32830643653869e-10\\
0.00781249999999999	3.7252902984619e-09\\
0.015625	5.96046447753904e-08\\
0.03125	9.53674316406247e-07\\
0.0625	1.52587890625e-05\\
0.125	0.000244140625\\
0.25	0.00390625\\
0.5	0.0625\\
1	1\\
};
\addlegendentry{$\LandauO(\alpha^4)$}

\end{axis}
\end{tikzpicture}%
	\definecolor{mycolor1}{rgb}{0.00000,0.44700,0.74100}%
\definecolor{mycolor2}{rgb}{0.85000,0.32500,0.09800}%
\begin{tikzpicture}

\begin{axis}[%
width=.3\textwidth,
height=.4\textwidth,
at={(0,0)},
scale only axis,
xmode=log,
xmin=1e-05,
xmax=1,
xminorticks=true,
xlabel style={font=\color{white!15!black}},
xlabel={$\alpha\sim\|\xi\|_{L_{\prior{}}^4(\Omega;X)}$},
ymode=log,
ymin=2.22044604925031e-16,
ymax=1,
yminorticks=true,
ylabel style={font=\color{white!15!black}},
ylabel={$\|\cdot\|_{L^2(\domain)\otimes L^2(\domain)}$-error},
axis background/.style={fill=white},
title style={font=\bfseries},
title={Error $\Cov_\posterior{}$, $\Cor_\posterior{}$},
axis x line*=bottom,
axis y line*=left,
legend style={at={(-0.2,-0.25)}, anchor=north west, legend cell align=left, align=left, draw=white!15!black},
legend columns=2,
]

\addplot [color=mycolor2, dashed, mark=+, mark options={solid, mycolor2}]
  table[row sep=crcr]{%
6.10351562499999e-05	3.83411659643945e-17\\
0.0001220703125	3.46261365305541e-16\\
0.000244140625	5.17109994226683e-15\\
0.000488281249999999	8.24136154187585e-14\\
0.000976562499999999	1.31853521905392e-12\\
0.001953125	2.10972873227315e-11\\
0.00390625	3.3754608248142e-10\\
0.00781249999999999	5.39972265208515e-09\\
0.015625	8.63287655922704e-08\\
0.03125	1.37694680815867e-06\\
0.0625	2.17479160778098e-05\\
0.125	0.000328358840327718\\
0.25	0.00394829054639808\\
0.5	0.02667306596041\\
1	0.129789671235362\\
};
\addlegendentry{$\operatorname{Cor}_\posterior{}[ R_{1,x_{\xi}} ]$}

\addplot [color=mycolor2, dashed, mark=x, mark options={solid, mycolor2}]
  table[row sep=crcr]{%
6.10351562499999e-05	3.99049228064369e-17\\
0.0001220703125	3.79841805385784e-16\\
0.000244140625	5.70407803634797e-15\\
0.000488281249999999	9.08134612052697e-14\\
0.000976562499999999	1.45229286619331e-12\\
0.001953125	2.32343790207599e-11\\
0.00390625	3.71709025318657e-10\\
0.00781249999999999	5.94507614901085e-09\\
0.015625	9.49777363854143e-08\\
0.03125	1.5105493262474e-06\\
0.0625	2.36066206486353e-05\\
0.125	0.000345676929825376\\
0.25	0.00401012158244311\\
0.5	0.0270034843837886\\
1	0.130439476671303\\
};
\addlegendentry{$\operatorname{Cov}_\posterior{}[ R_{1,x_{\xi}} ]$}
\addplot [color=mycolor1, dashed, mark=+, mark options={solid, mycolor1}]
  table[row sep=crcr]{%
3.05175781249999e-05	3.23911533723845e-15\\
6.10351562499999e-05	6.47860495816937e-15\\
0.0001220703125	1.29585909725432e-14\\
0.000244140625	2.58127621764823e-14\\
0.000488281249999999	5.01118235403453e-14\\
0.000976562499999999	7.79840932939721e-14\\
0.001953125	3.99238778528443e-13\\
0.00390625	8.48401460977598e-12\\
0.00781249999999999	1.40379127351973e-10\\
0.015625	2.25066124480234e-09\\
0.03125	3.57170357690754e-08\\
0.0625	5.51973243023162e-07\\
0.125	7.69443198094908e-06\\
0.25	7.42750935567882e-05\\
0.5	0.000383996322858865\\
1	0.001582091649879\\
};
\addlegendentry{$\operatorname{Cor}_\posterior{}[ R_{2,x_{\xi}} ]$}

\addplot [color=mycolor1, dashed, mark=x, mark options={solid, mycolor1}]
  table[row sep=crcr]{%
0.000488281249999999	1.39036371487736e-16\\
0.000976562499999999	2.22711434516285e-15\\
0.001953125	3.55948499813848e-14\\
0.00390625	5.69465964311051e-13\\
0.00781249999999999	9.10775438303587e-12\\
0.015625	1.45492154841591e-10\\
0.03125	2.31321343434586e-09\\
0.0625	3.61037359549685e-08\\
0.125	5.25639915769298e-07\\
0.25	5.9300385477247e-06\\
0.5	3.6854171078436e-05\\
1	0.000163721813630701\\
};
\addlegendentry{$\operatorname{Cov}_\posterior{}[ R_{2,x_{\xi}} ]$}

\addplot [color=black, dashed, mark options={solid, black}]
  table[row sep=crcr]{%
6.10351562499999e-05	1.38777878078145e-17\\
0.0001220703125	2.22044604925032e-16\\
0.000244140625	3.55271367880051e-15\\
0.000488281249999999	5.68434188608077e-14\\
0.000976562499999999	9.09494701772923e-13\\
0.001953125	1.45519152283668e-11\\
0.00390625	2.32830643653869e-10\\
0.00781249999999999	3.7252902984619e-09\\
0.015625	5.96046447753904e-08\\
0.03125	9.53674316406247e-07\\
0.0625	1.52587890625e-05\\
0.125	0.000244140625\\
0.25	0.00390625\\
0.5	0.0625\\
1	1\\
};
\addlegendentry{$\LandauO(\alpha^4)$}

\end{axis}
\end{tikzpicture}%
	\caption{
	Convergence for Darcy model and centered prior distributions. 
	}
	\label{fig:conv_exp_cov4}
\end{figure}
\subsubsection{Iterative improvement of the reference point}
In the following we aim to show that the iteration from \cref{sec:impr}, if convergent, can indeed improve the reference value of the Taylor expansion and that the approximation rate from \cref{cor:improvedapproximation} indeed holds. 
To this end, as in \cref{sec:perturbationexperiments}, we consider a scaled version of $\xi(\omega)$ and run the iteration \eqref{eq:iteration:approx} for each scaled version of $\xi(\omega)$. 
The magnitude of the updates to the reference values is illustrated in \cref{fig:iterationupdates}. The result after 100 iterations is then compared to the QMC reference solution for each $\alpha$, the error being measured in the $\|\cdot\|_{L^2(\domain)}$-norm and plotted in \cref{fig:conv_iteration}.
We can see that the iteration diverges for the largest two perturbation values since the default step size is too large. 
The other iterations are converging with the larger perturbations showing significantly faster rates of convergence.
The smallest calculated perturbations are still very far from their fixed point relative to the expected error. 
For intermediate sizes of perturbations the rate predicted in \cref{cor:improvedapproximation} can be confirmed.
\begin{figure}
	\centering
	\input{fig/iteration/fig_iteration.tex}
	\caption{$\|\cdot\|_{L^2(\domain)}$-norm of updates $d^{(n)}$ for each $\alpha$ for $100$ iterations for Darcy model.}
	\label{fig:iterationupdates}
\end{figure}

\begin{figure}
	\centering
	\definecolor{mycolor1}{rgb}{0.00000,0.44700,0.74100}%
\definecolor{mycolor2}{rgb}{0.85000,0.32500,0.09800}%
\begin{tikzpicture}

\begin{axis}[%
width=.66\textwidth,
height=.4\textwidth,
at={(0,00)},
scale only axis,
xmode=log,
xmin=1e-03,
xmax=1,
xminorticks=true,
xlabel style={font=\color{white!15!black}},
xlabel={$\alpha\sim\|\xi\|_{L_{\prior{}}^3(\Omega;X)}$},
ymode=log,
ymin=1e-07,
ymax=2.6499043776189,
ylabel style={font=\color{white!15!black}},
ylabel={$\|\cdot\|_{L^2(\domain)}$-error},
yminorticks=true,
axis background/.style={fill=white},
title style={font=\bfseries},
title={Error at iteratively improved reference point},
axis x line*=bottom,
axis y line*=left,
legend style={at={(.66\textwidth,.15)}, anchor=south east, legend cell align=left, align=left, draw=white!15!black}
]
\addplot [color=mycolor2, dashed, mark=+, mark options={solid, mycolor2}]
  table[row sep=crcr]{%
0.00781249999999999	2.63142639706165e-05\\
0.015625	2.8598712638154e-05\\
0.03125	1.5430815142116e-05\\
0.0625	0.000133411393157643\\
0.125	0.00119493677234557\\
0.25	0.0141552088329052\\
0.5	2.64990437761889\\
};
\addlegendentry{$\mean_{\posterior{}}[x_{\xi}]-x^{(100)}$}

\addplot [color=black, dashed, mark options={solid, black}]
  table[row sep=crcr]{%
0.00781249999999999	4.76837158203124e-07\\
0.015625	3.81469726562499e-06\\
0.03125	3.05175781249999e-05\\
0.0625	0.000244140625\\
0.125	0.001953125\\
0.25	0.015625\\
0.5	0.125\\
1	1\\
};
\addlegendentry{$\LandauO(\alpha^3)$}

\end{axis}
\end{tikzpicture}%
	\caption{Error of $x^{(100)}$ compared to QMC approximation of $\mean_\posterior{}[R_2]=\mean_\posterior{}[x_\xi]$ in $\|\cdot\|_{L^2(\domain)}$-norm for Darcy model.
	}
	\label{fig:conv_iteration}
\end{figure}

\subsection{Lotka--Volterra model with Brownian Bridge perturbation}

As a second example we consider a Lotka--Volterra predator-prey model where the perturbation $\xi(t)$ is a scaled Brownian bridge, i.e.,
\begin{align*}
	\frac{\dd}{\dd t} y_1(t,\omega)
	&= 
	\bigg(\frac{15}{2} + \xi(t,\omega)\bigg) y_1(t,\omega) 
	- \frac{3}{40} y_1(t,\omega) y_2(t,\omega) 
	& 
	t & \in [0,1] , \\
	\frac{\dd}{\dd t} y_2(t,\omega) 
	&= 
	\frac{3}{20} y_1(t,\omega) y_2(t,\omega) 
	- \frac{15}{2} y_2(t,\omega) 
	& 
	t & \in [0,1] ,
\intertext{with initial values $y_1(0) = y_2(0) = 20$ and}
	\xi(t,\omega)
	&= \alpha \sum \limits_{i=1}^{M}  \frac{\sqrt{2}}{k\pi} \sin(k\pi t)~ z_i(\omega) 
	&
	t &\in [0,1].
\end{align*}
with $z_i(\omega) \sim \mathcal{N}(0,1)$. 
Note that $\xi$ is continuous, but not differentiable, i.e., $X = C^0([0,1])$. 
Clearly $X$ is a Banach space when equipped with the maximum norm.
We observe $y \in Y = [C^1([0,1])]^2$ at $t\in\{\tfrac{1}{4},\tfrac{1}{2},\tfrac{3}{4},1\}$, i.e., $Y=[\mathbb{R}^2]^4\simeq\mathbb{R}^8$.
For the covariance matrix of the observation noise, we choose
\begin{align*}
	\Sigma &= \sigma  \Sigma_0 , & 
	\Sigma_0 &= 
	I_4 \otimes	
	\begin{pmatrix}
	1 & \tfrac{1}{10} \\
	\tfrac{1}{10} & 1
	\end{pmatrix}
\end{align*}
with $I_4 \in \mathbb{R}^{4\times 4}$ the identity matrix and $\sigma$ is a scaling parameter.
That is, observations at different times are uncorrelated and observations of predator and prey at the same point in time are slightly correlated.
We focus on the case $Z=X$ and estimate $\mean_{\posterior{}}[\xi(t)]$.

To calculate the approximations, we need $\Fre_{x_0} Q = \Fre_{x_0} y$ which is given by the ODE
\begin{align*}
	\frac{\dd}{\dd t} \bigg(\Fre_{x_0}y_1(t)[\xi]\bigg)
	&= 
	\frac{15}{2} \Fre_{x_0} y_1(t)[\xi] 
	- \frac{3}{40} \bigg( \Fre_{x_0}y_1(t)[\xi] y_2(t) 
	+ y_1(t) \Fre_{x_0}y_2(t)[\xi] \bigg) , \\
	\frac{\dd}{\dd t} \bigg(\Fre_{x_0}y_2(t)[\xi]\bigg)
	&= 
	\frac{3}{20} \bigg(\Fre_{x_0}y_1(t)[\xi] y_2(t) 
	+ y_1(t) \Fre_{x_0}y_2(t)[\xi] \bigg) 
	- \frac{15}{2} \Fre_{x_0}y_2(t)[\xi] ,
\end{align*}
$t \in [0,1]$, with initial values $\Fre_{x_0}y_1(0)[\xi] = \Fre_{x_0}y_2(0)[\xi] = 0$.

In \cref{fig:ode_sample} the unperturbed trajectory of the ODE is illustrated with a sample of the perturbed solution and its first-order Taylor approximation. 
The underlying sample of the Brownian bridge is depicted in the second picture.
\begin{figure}
	\centering
	\input{fig/LV/sample/fig_ode.tex}
	\input{fig/LV/sample/fig_BB.tex}
	\caption{Lotka--Volterra model:
	Left: unperturbed trajectory, perturbed trajectory and first-order Taylor approximation of perturbed trajectory. 
	Right: sample of Brownian bridge.} \label{fig:ode_sample}
\end{figure}
In the following, we use the measurement data $\data = [ (97,19) , (46,333) , (7,86) , (20,20)]\in  [\mathbb{R}^2]^4$.

\subsubsection{Discretization}

For the evaluation of the {ODE}s we use $1000$~equidistant time steps and a predictor-corrector algorithm based on the explicit and implicit Euler scheme. 
To this end, we performs one step with the explicit Euler method which is then taken as the starting point for five steps of the fix point iteration of the implicit Euler method. 
We truncate the Brownian bridge after $M = 100$ terms.

\subsubsection{Local sensitivity analysis for Bayesian inverse problems}

We again check the convergence rate of the centered approximations of the posterior mean for $\alpha \in \{2^{-n}: n=0,\ldots,7\}$.
As a reference estimate to calculate our error we use an antithetic Monte Carlo approximation of $10^7$ samples.
To show the influence of the observation noise on the approximation, we calculate the error for $\sigma \in \{5,10,20\}$.
The maximum norm of the estimation error are shown in~\cref{fig:LV_conv4}.
\begin{figure}
	\centering
	\definecolor{mycolor1}{rgb}{0.06600,0.44300,0.74500}%
\definecolor{mycolor2}{rgb}{0.86600,0.32900,0.00000}%
\definecolor{mycolor3}{rgb}{0.92900,0.69400,0.12500}%
\begin{tikzpicture}

\begin{axis}[%
width=.66\textwidth,
height=.4\textwidth,
at={(0,0)},
scale only axis,
xmode=log,
xmin=1e-3,
xmax=1,
xminorticks=true,
xlabel={$\alpha \sim \|\xi\|_{L_{\prior{}}^4(\Omega;X)}$},
ymode=log,
ymin=1e-10,
ymax=100,
yminorticks=true,
title style={font=\bfseries},
title={Error $\mean_\posterior{}$},
ylabel={$\|\cdot\|_{C^0([0,1])}$-norm},
axis background/.style={fill=white},
axis x line*=bottom,
axis y line*=left,
legend style={at={(1,0.03)}, anchor=south east, legend cell align=left, align=left}
]
\addplot [color=mycolor1, dashed, mark=star, mark options={solid, mycolor1}]
  table[row sep=crcr]{%
0.00781249999999999	6.50940855129479e-06\\
0.015625	9.55277885613269e-05\\
0.03125	0.00136285878204135\\
0.0625	0.0158437123223724\\
0.125	0.123678886292646\\
0.25	0.661393169489952\\
0.5	2.92080615460051\\
1	12.0308164363915\\
};
\addlegendentry{$\sigma = 5$}

\addplot [color=mycolor2, dashed, mark=star, mark options={solid, mycolor2}]
  table[row sep=crcr]{%
0.00781249999999999	1.73915425863037e-06\\
0.015625	2.44223397510451e-05\\
0.03125	0.000360002430209483\\
0.0625	0.00477330217421364\\
0.125	0.0464473474070584\\
0.25	0.295522059319028\\
0.5	1.40824310291517\\
1	5.95371226509448\\
};
\addlegendentry{$\sigma = 10$}

\addplot [color=mycolor3, dashed, mark=star, mark options={solid, mycolor3}]
  table[row sep=crcr]{%
0.00781249999999999	4.88509112273625e-07\\
0.015625	6.21976915125915e-06\\
0.03125	9.1152161144133e-05\\
0.0625	0.00130558342808157\\
0.125	0.0153561884152966\\
0.25	0.121816312459019\\
0.5	0.658227318931618\\
1	2.91701368203035\\
};
\addlegendentry{$\sigma = 20$}

\addplot [color=black, dashed, mark options={solid, black}]
  table[row sep=crcr]{%
0.00781249999999999	3.72529029846191e-09\\
0.015625	5.96046447753907e-08\\
0.03125	9.53674316406247e-07\\
0.0625	1.52587890625e-05\\
0.125	0.000244140625\\
0.25	0.00390625\\
0.5	0.0625\\
1	1\\
};
\addlegendentry{$\LandauO(\alpha^4)$}

\end{axis}
\end{tikzpicture}%
	\caption{Convergence for Lotka--Volterra model and centered prior distribution for different noise levels.} \label{fig:LV_conv4}
\end{figure}
The calculation of the approximations took less than a second and the Monte Carlo approximations took 27~minutes for each $\alpha$. 
The memory required again scales linearly with the number of KLE-terms $M$ and the degrees of freedom of the discretization, i.e., the number of time steps.

\subsubsection{Iterative improvement of the reference point}

We also test the iteration on the Lotka--Volterra model.
In \cref{fig:LV_iteration} the magnitude of the updates is shows for the first 100 iterations.
\begin{figure}
	\centering
	\input{fig/LV/fig_lv_iteration.tex}
	\caption{$\|\cdot\|_{C^0([0,1])}$-norm of updates $d^{(n)}$ for 100 iterations.} \label{fig:LV_iteration}
\end{figure}
The result after 100 iterations is then compared against an antithetic Monte Carlo approximation and the respective error shown in~\cref{fig:LV_iteration_conv}.
\begin{figure}
	\centering
	\definecolor{mycolor1}{rgb}{0.00000,0.44700,0.74100}%
\definecolor{mycolor2}{rgb}{0.85000,0.32500,0.09800}%
\begin{tikzpicture}

\begin{axis}[%
width=.66\textwidth,
height=.4\textwidth,
at={(0,0)},
scale only axis,
xmode=log,
xmin=1e-3,
xmax=1,
xminorticks=true,
xlabel={$\alpha \sim \|\xi\|_{L_{\prior{}}^3(\Omega;X)}$},
ymode=log,
ymin=1e-07,
ymax=1,
yminorticks=true,
ylabel={$\|\cdot\|_{C^0([0,1])}$-error},
axis background/.style={fill=white},
title style={font=\bfseries},
title={Error at iteratively improved referene point},
axis x line*=bottom,
axis y line*=left,
legend style={at={(0.97,0.03)}, anchor=south east, legend cell align=left, align=left}
]
\addplot [color=mycolor2, dashed, mark=+, mark options={solid, mycolor2}]
  table[row sep=crcr]{%
0.00781249999999999	6.68711349055228e-05\\
0.015625	0.00084734338718433\\
0.03125	0.00904290673661537\\
0.0625	0.049909841341203\\
0.125	0.149970643807502\\
0.25	0.179349593463464\\
};
\addlegendentry{$\mean_{\posterior{}}[x_\xi] - x^{(100)}$}

\addplot [color=black, dashed, mark options={solid, black}]
  table[row sep=crcr]{%
0.00781249999999999	4.76837158203125e-07\\
0.015625	3.814697265625e-06\\
0.03125	3.0517578125e-05\\
0.0625	0.000244140625\\
0.125	0.001953125\\
0.25	0.015625\\
0.5	0.125\\
1	1\\
};
\addlegendentry{$\LandauO(t^3)$}

\end{axis}
\end{tikzpicture}%
	\caption{Error of $x^{(100)}$ compared to MC approximation of $\mean_{\posterior{}}[x_\xi]$ in $\|\cdot\|_{C^0([0,1])}$-norm.} \label{fig:LV_iteration_conv}
\end{figure}

\section{Conclusion} \label{sec:conclusion}
In this article we combined asymptotic expansions from local sensitivity analysis with the framework of Bayesian inverse problems, i.e., we derived Taylor-like expansions for first and second moments  of predictions functions with respect to the posterior measure. 
To mitigate the influence of the a-priorily chosen reference point, we derived an iterative scheme to improve this reference point for the case when the prediction function is the identity and discussed its connection to classical inverse problems and Tikhonov regularization. 
The analysis was carried out in infinite dimensional spaces, and is thus applicable to a large class of problems, particularly to partial differential equations. 
We provided such an example in our numerical experiments, where our numerical examples for the Darcy flow with log-normal permeability and the Lotka--Volterra ODE with Brownian bridge perturbation confirmed our analytical approximation rates.

\bibliographystyle{siamplain}
\bibliography{Bibliography.bib}
\end{document}